\newcommand{\bbn}{\mathbb{N}}
\newcommand{\dhr}{\mathrel{\lhook\joinrel\relbar\kern-.8ex\joinrel\lhook\joinrel\rightarrow}}
\newcommand{\seqk}[1]{(#1 _k)_{k\in\bbn}}
\newtheorem{thm}{Theorem}[section]
\newtheorem{lem}[thm]{Lemma}
\newtheorem{prop}[thm]{Proposition}
\newtheorem{rem}[thm]{Remark}
\newtheorem{deff}[thm]{Definition}
\DeclareMathAlphabet{\mathpzc}{OT1}{pzc}{m}{it}
\begin{document}
\bibliographystyle{plain}

\title[Numerics in a Box]{Using an Encompassing Periodic Box to Perform Numerical
  Calculations on General Domains}

\author{Patrick Guidotti}
\address{University of California, Irvine\\
Department of Mathematics\\
340 Rowland Hall\\
Irvine, CA 92697-3875\\ USA }
\email{gpatrick@math.uci.edu}

\begin{abstract}
This paper shows how numerical methods on a regular grid in a box can
be used to generate numerical schemes for problems in general smooth
domains contained in  the box with no need for a domain specific
discretization. The focus is mainly be on spectral discretizations due
to their ability to accurately resolve the interaction of finite order
distributions (generalized functions) and smooth functions. Mimicking
the analytical structure of the relevant (pseudodifferential)
operators leads to viable and accurate numerical representations and
algorithms. An important byproduct of the structural insights gained
in the process is the introduction of smooth kernels (at the discrete
level) to replace classical singular kernels which are typically
used in the (numerical) representations of the solution. The new
kernel representations yield enhanced numerical resolution and, while
they necessarily lead to significantly higher condition numbers, they
also suggest natural and effective ways to precondition the systems.
\end{abstract}

\keywords{Spectral methods, meshless methods, numerical analysis,
  boundary value problems.}
\subjclass[1991]{}

\maketitle

\section{Introduction}
It is the primary goal of this paper to develop a framework which allows one
to extend the benefits of numerical spectral methods from boxes to
arbitrary geometry domains. The idea is to simulate problems on
domains $\Omega\subset B$ located inside a periodicity box $B=[-\pi,\pi]^N$ by
using spectral approximation of generalized functions
(distributions). This is best illustrated in the case of boundary
value problems, which are also an important application of the
method. Consider the boundary value problem
$$\begin{cases}
 \mathcal{A}u=f&\text{in }\Omega,\\
 \mathcal{B}u=g&\text{on }\partial \Omega,
\end{cases}$$ 
for some generic differential operator $\mathcal{A}$ and boundary
operator $\mathcal{B}$. It is no restriction to assume that the
operators and the data be defined everywhere in the box $B$. One
obtains a numerical approximation for the boundary value problem in
the following manner. First discretize the periodicity box $B$ by a
regular grid $G^m=\{x^m_j\,:\, j\in \mathbb{Z}^m_N\}$ with $2^{mN}$
points if working in dimension $n\in \mathbb{N}$, then, independently,
discretize the boundary $\Gamma$ of the domain $\Omega$ by a subset
$\Gamma^n=\{y_1,\dots,y_n\}\subset \Gamma$. Choose a discretization $A^m$
of the operator $\mathcal{A}$ which operates on the grid $G^m$ and
find a solution $v^m:G^m\to \mathbb{R}$ of
$$
 A^mv^{m}=f^m
$$
in $G^m$ for a discretization of $f$. With that in hand, generate
numerical approximations for functions $\psi ^m_k:G^m\to \mathbb{R}$,
$k=1,\dots,n$, in the kernel of the operator $\mathcal{A}_\Omega$ on
$\Omega$ and try to adjust the solution $v^m$ by a linear combination 
$$
 w^{m,n}=\sum _{k=1}^nw^{m,n}_k\psi ^m_k
$$
of these kernel elements in order for $u^{m,n}=v^m+w^{m,n}$ to
satisfy a discretization $\mathcal{B}^{m,n}u^{m,n}=g^n$ of the
boundary condition for a discretization $g^n:\Gamma^n\to \mathbb{R}$
of $g$. Choosing $\mathcal{B}$ to be the trace operator
$\gamma_\Gamma$ at first for ease of presentation, this can be done as
follows. Approximate $\delta_{y_k}$ for $k=1,\dots,n$ by its spectral
representation $\delta^m_{y_k}$ on the periodic grid $G^m$ and insist
that
$$
 \langle \delta^m_{y_j},u^{m,n}\rangle _{q^m}=g^n_j,\: j=1,\dots, n,
$$
where $\langle\cdot,\cdot \rangle _{q^m}$ is the discrete duality
pairing (scalar product) discretizing the continuous duality pairing
$\langle\cdot,\cdot \rangle_{\mathcal{D}_\pi ',\mathcal{D}_\pi}$
between periodic distributions and test functions. Details will be
given in the rest of the paper. Following the strategy outlined above
leads to a system for the unknown $w^{m,n}$ of the form
$$
 \langle \delta^m_{y_j},w^{m,n}\rangle _{q^m}=\sum _{k=1}^n\langle
 \delta^m_{y_j},\psi^m_k\rangle
 _{q^m}w^{m,n}_k=\sum_{k=1}^nM_{jk}w^{m,n}_k=g^n_j-\langle  
 \delta^m_{y_j},v^{m}\rangle _{q^m},\: j=1,\dots, n, 
$$
One can think of $\delta ^m_{y_\cdot}$ as the discrete kernel of the
trace operator $\gamma _\Gamma$. It is therefore possible to deal with
a more general boundary operator $\mathcal{B}$ by deriving a
``natural'' numerical approximation $B^m_{y_j}$ of its
distributional kernel for $j=1,\dots,n$. This would lead to the system
$$
 \langle B^m_{y_j},w^{m,n}\rangle _{q^m}=\sum _{k=1}^n\langle
 B^m_{y_j},\psi^m_k\rangle _{q^m}w^{m,n}_k=g^n_j-\langle 
 B^m_{y_j},v^{m}\rangle _{q^m},\: j=1,\dots, n, 
$$
In order to obtain a numerical method it remains to generate the
kernel functions $\psi _k^m$ for $k=1,\dots,m$. This can be done in
many different ways. In order to, at first, make a connection explicit
to pseudo-differential operators, again consider
$\mathcal{B}=\gamma_\Gamma$ and proceed in the following
manner. Take the spectral approximation $\delta^m_{y_k}$ for
$k=1,\dots,n$ and set 
$$
 \psi_k^m=(A^m)^{-1}\delta^m_{y_k}.
$$
Since the Dirac distribution is ``supported'' on the singleton $\{y_k\}$, the
function $\psi _k^m$ will indeed ``lie'' in the kernel of $A^m$ over
$\Omega$. Since
these functions are ``peaked'' at different locations $y_k$, they will
be linearly independent. The matrix $M$ in the system for the unknown
$w^{m,n}$ is therefore given by
$$
 M_{jk}=\langle\delta ^m_{y_j},(A^m)^{-1}\delta ^m_{y_k}\rangle
 _{q^m},\: j,k=1,\dots,n. 
$$
Latter can be recognized as the discrete counterpart of 
$$
 m(y,\eta)=\langle \delta _{y},\mathcal{A}^{-1}\delta
 _{\eta}\rangle,\: y,\eta\in \Gamma,
$$
the distributional kernel of a pseudodifferential operator
$\mathcal{A}^{-1}$ on the
boundary curve $\Gamma$. This connection is made more precise in the
rest of the paper and provides a framework in which to obtain
analytical proofs for the numerical methods introduced. For
implementation purposes, however, it is best to proceed in a somewhat
different way when constructing the 
kernel functions $\psi_k^m$. Instead of using the ``rougher'' Dirac
distributions used above, it is better to replace them by smooth
functions $\varphi _{\tilde y_k}$ which are supported outside of
$\Omega$ with support ``centered'' at $\tilde y_k=y_k+\delta
\nu_\Gamma(y_k)$ for $\delta>0$ where $\nu_\Gamma(y_k)$ is the unit
outer normal to the boundary $\Gamma$ at the point $y_k$. After
discretization this leads to the alternative matrix
$$
 \widetilde{M}_{jk}=\langle\delta ^m_{y_j},(A^m)^{-1}\varphi ^m_{\tilde y_k}\rangle
 _{q^m},\: j,k=1,\dots,n,
$$
which is the discretization of a smoothing operator with kernel
$$
 \widetilde{m}(y,\eta)=\langle \delta _{y},\mathcal{A}^{-1}\varphi
 _{\tilde\eta}\rangle,\: y,\eta\in \Gamma.
$$
As such, $M$ will be easier to capture numerically (fast convergent
expansion of its kernel function) but also badly conditioned (as a
smoothing and thus compact operator with unbounded inverse -- read less
diagonally dominant). In spite
of this, ``natural'' and effective preconditioning procedures can be
devised which completely remove this drawback. 

The method above can be thought of as a fully discrete boundary
integral method. As such it does not rely on the availability of an
explicit analytic representation of the kernels involved and is
therefore applicable to non-constant coefficient differential
operators. Even for situations where analytic representations of the
kernel are known, the singularity shifting/removal procedure employed
above offers an effective and accurate numerical discretization method
which completely avoids the need to find ways to numerically deal with
the singularity of the kernel function.

One of the main reason for developing the method is its applicability
to the numerical computation of solutions to moving boundary value
problems. The fact that the domain evolves in time would in general require  
continuous remeshing of the varying computational domain. In the 
approach presented here, the encompassing computational domain remains
unchanged during the evolution and computation of the moving boundary is
reduced to tracking the location of its discretization points.

The rest of the paper
is organized as follows. In the next section some preliminary results
are obtained which highlight the main features of the underlying
spectral approach to approximating generalized functions and
test functions in the context of periodicity and for boundary value
problems, in particular; in Section 3 details are given about the
discretizations used in the concrete examples studied in Section
4. The following section is dedicated to the specifics of the
numerical implementation and to numerical experiments which illustrate
the main theoretical insights and the advantages of the proposed
method. A brief conclusion ends the paper.
\section{Preliminaries}
\subsection{Setup}
Before working with the relevant discretizations, the stage is set by fixing the
analytical context which will very much guide the numerical procedures developed in
the rest of the paper. Let $B=[-\pi,\pi)^N$ be the periodicity box bounding the area
of interest. Extensive use will be made of distributions and of test functions. Latter
are periodic smooth functions belonging to one of the following useful spaces
\begin{align}
 \mathcal{D}_\pi(B)&=\mathcal{D}_\pi=\big\{ \varphi\in
 \operatorname{C}^\infty(\mathbb{R}^N)\, \big |\, \varphi\text{ is
  }2\pi\text{-periodic}\big\} \\
 \mathcal{D}^m_\pi(B)&=\mathcal{D}^m_\pi=\big\{ \varphi\in
 \operatorname{C}^m(\mathbb{R}^N)\, \big |\, \varphi\text{ is
  }2\pi\text{-periodic}\big\} \\
 \mathcal{D}_0(B)&=\mathcal{D}_0=\big\{ \varphi\in
 \operatorname{C}^\infty(\mathbb{R}^N)\, \big |\,
  \operatorname{supp}(\varphi)\subset\subset B\big\}
\end{align}
where $m\in \mathbb{N}$. The first space carries its standard locally
convex topology generated by the family of seminorms $\{p_m:m\in
\mathbb{N}\}$ given by
$$
 p_m(\varphi)=\sup _{|\alpha|\leq m}\| \partial ^\alpha \varphi\| _\infty,
$$
the second is a Banach with respect to the norm $p_m$, and the last carries the
natural inductive limit-Fr\'echet topology, i.e. the coarsest topology
which makes the inclusions 
$$
 \mathcal{D}_K(B)=\big\{ \varphi\in
 \operatorname{C}^\infty(\mathbb{R}^N)\, \big |\,
  \operatorname{supp}(\varphi)\subset K\big\}\hookrightarrow \mathcal{D}_0(B)
$$
continuous for all $K=\overline{K}\subset\subset B$. Notice that $\mathcal{D}_K(B)$
is endowed with the locally convex topology induced by the seminorms
$$
 p_{m,K}(\cdot)=\sup _{|\alpha|\leq m}\| \partial ^\alpha \cdot\| _{\infty,K},
$$
where the additional subscript indicates that the supremum norm is
taken over the set $K$. The space
$$
 \mathcal{D}_\pi '(B)=\mathcal{D}'_\pi=\big\{ u:\mathcal{D}_\pi\to \mathbb{K}\, \big
 |\, u\text{ is linear and continous}\big\}
$$
is then the space of $\mathbb{K}(=\mathbb{R},\mathbb{C})$-valued distributions dual to
$\mathcal{D}_\pi$. On
$\operatorname{L}^2_\pi=\operatorname{L}^2_\pi(B)=\operatorname{L}^2(B)$ there is a 
natural orthonormal basis $(e_k)_{k\in \mathbb{Z}^N}$ given by
$$
 e_k(x)=\frac{1}{(2\pi)^{N/2}}e^{ik\cdot x},\: x\in B,\: k\in \mathbb{Z}^N,
$$
consisting of eigenfunctions of the periodic Laplacian $-\Delta _\pi$. It is
well-known that
$$
 \mathcal{F}: \operatorname{L}^2_\pi\to l^2(\mathbb{Z}^n),\: \varphi=\sum_{k\in
   \mathbb{Z}^n}\hat\varphi _ke_k\mapsto(\hat\varphi)_{k\in \mathbb{Z}^N}
$$
is an isometric isomorphism where
$$
 \hat\varphi _k=\int _B\varphi(x)\bar{e}_k(x)\, dx=\langle\varphi,\bar
 e_k\rangle=(\varphi |e_k). 
$$
In particular one has that $\| \varphi \|
_{\operatorname{L}^2_\pi}=\|\seqk{\hat\varphi}\| _{l^2(\mathbb{Z}^n)}$ and Parseval's identity
$$
 (\varphi |\psi)=\int _B\varphi\,\bar\psi\, dx=\sum _{k\in
   \mathbb{Z}^n}\hat\varphi _k\overline{\hat\psi _k}=(\hat \varphi
 |\hat\psi)\text{ for }\varphi,\psi\in \operatorname{L}^2_\pi.
$$
Notice that the formul{\ae} above use the notations $\langle\cdot,\cdot\rangle$ and
$(\cdot |\cdot)$ for the duality pairing and the scalar product,
respectively. The former is clearly motivated by the natural duality
pairing between distributions and test functions
$$
 \langle\cdot,\cdot\rangle:\mathcal{D}'_\pi\times \mathcal{D}_\pi\to
 \mathbb{K},\:(u,\varphi)\mapsto \langle u,\varphi\rangle=u(\varphi).
$$
Observe that, if $\varphi\in \mathcal{D}_\pi$, then $\partial^\alpha\varphi\in
\operatorname{L}^2_\pi$ for all $\alpha\in \mathbb{N}^n$ and thus
$$
 \partial^\alpha\sum_{k\in
   \mathbb{Z}^N}\hat\varphi_ke_k=\partial^\alpha\varphi=\sum_{k\in
   \mathbb{Z}^N}\widehat{(\partial^\alpha\varphi)}_ke_k=\sum _{k\in 
   \mathbb{Z}^N}(ik)^\alpha\hat\varphi _ke_k,
$$
with convergence in $\operatorname{L}^2_\pi$, owing to well-known properties of the
Fourier transform. Introducing the periodic Bessel potential spaces via
$$
 \operatorname{H}^s_\pi=\operatorname{H}^s_\pi(B)=\big\{ u\in
 \mathcal{D}'_\pi\, \big |\, \sum _{k\in \mathbb{Z}^n}(1+|k|^2)^s\hat u
 ^2_k<\infty\big\}, 
$$
for $s\in \mathbb{R}$ and $\hat u_k=\langle u,\bar e_k\rangle=(u
|e_k)$, it follows easily that
$$
 \sum _{|k|\leq M}\hat\varphi _ke_k\to\varphi\text{ as }M\to\infty,
$$
in $\operatorname{H} ^s_\pi$ for any $s\geq 0$ if $\varphi\in \mathcal{D}_\pi$. By
the well-known embedding
\begin{equation}\label{embed}
 \operatorname{H}^s_\pi\hookrightarrow
 \mathcal{D} ^m_\pi=\mathcal{D}^m_\pi(B), 
\end{equation}
valid for $s>N/2+m$, it then follows that the convergence of the Fourier series
actually takes place in the topology of $\mathcal{D}_\pi$. An important
consequence of this fact is the validity of the following generalized
Parseval's identity 
\begin{gather}\label{parseval}
 \langle u,\varphi\rangle=\langle u,\sum_{k\in \mathbb{Z}^N}\hat\varphi
 _ke_k\rangle=\sum_{k\in \mathbb{Z}^N} \underset{=:\tilde u_k}{\underbrace{\langle
     u,e_k\rangle}}\,\hat\varphi _k=\sum_{k\in\mathbb{Z}^N} \tilde
 u_k\hat\varphi _k,\\
 (u |\varphi )=(u |\sum_{k\in \mathbb{Z}^N}\hat \varphi
 _ke_k)=\sum_{k\in \mathbb{Z}^N}(u|e_k)\hat \varphi_k=(\hat u|\hat
 \varphi)
\end{gather}
for $u\in \mathcal{D}'_\pi$, $\varphi\in \mathcal{D}_\pi$, and
$(u|e_k)=\langle u,\bar{e}_{k}\rangle=\langle u,e_{-k}\rangle$. A distribution $u\in
\mathcal{D}'_\pi$ is said to be of finite order $m\in \mathbb{N}$ if it admits an
estimate of the form
$$
 |\langle u,\varphi\rangle|\leq c\, p_m(\varphi),\: \varphi\in \mathcal{D}^m_\pi,
$$
for a non-negative constant $c$ but not for $m$ replaced by $m-1$. It follows from a
density argument combined with the embedding \eqref{embed} that any finite order
distribution belongs to $\operatorname{H}^{-s}_\pi$ for some finite $s\geq 0$. The
upshot of this is that \eqref{parseval} can be used to evaluate the action of a
finite order distribution on a test function by a fast converging series since $(\tilde
u_k)_{k\in \mathbb{Z}^N}$ is polynomially bounded and
$(\hat\varphi_k)_{k\in \mathbb{Z}^N}$ decays faster than the reciprocal of 
any polynomial in $k$. This, combined with the choice of appropriate discretizations,
will be exploited later to derive highly accurate representations of various
operators (not even necessarily supported on the discretization grid itself). Indeed
many useful basic operations such as differentiation, integration,
evaluation/interpolation are distributions of finite order. It also turns out that,
for many interesting distributions $u\in \mathcal{D}'_\pi$, it will be possible to
compute their Fourier coefficients either exactly or in a highly accurate manner.
\subsection{Simple Illustrative Examples}
Consider first $\delta _{x_0}$ for $x_0\in B$ which is a zero order
distribution. Later $\delta _{x_0}$ will be discretized on a regular grid but $x_0$
will be allowed to be any point in the domain $B$. Then it holds that
$$
 \delta _{x_0}=\sum_{k\in \mathbb{Z}^N}\langle \delta
 _{x_0},e_k\rangle\bar e_k=\sum_{k\in \mathbb{Z}^N}(\delta
 _{x_0}|e_k)e_k. 
$$
Indeed
$$
 \langle\delta _{x_0},\varphi\rangle=\langle\delta _{x_0},\sum _{k\in
   \mathbb{Z}^N}\hat\varphi _ke_k\rangle=\sum_{k\in \mathbb{Z}^N}\langle\delta
 _{x_0},e_k\rangle\langle\varphi,\bar e_k\rangle=\langle\sum_{k\in
   \mathbb{Z}^N}{(\widetilde{\delta _{x_0}})}_k\bar e_k,\varphi\rangle,
$$
where ${(\widetilde{\delta _{x_0}})}_k=e_k(x_0)$. The convergence of this evaluation
series
$$
 \langle\delta _{x_0},\varphi\rangle=\sum_{k\in \mathbb{Z}^N}e_k(x_0)\hat\varphi _k,
$$
is clearly very fast and its coefficients are known either exactly or to a high
degree of accuracy. This seemingly very simple observation will play a crucial role
in the derivation of a highly accurate representation of higher dimensional kernels
related to boundary value problems.
\begin{rem}
While in this paper it will be enough to deal with the evaluation of smooth
functions, such as test functions, in \cite{G062} modifications are presented (in a
non-periodic context) which make it possible to retain good convergence properties
also for piecewise smooth functions, another important class of functions in applications.
\end{rem}
The next example shows how the considerations of Subsection 2.1 provide an abstract
framework in which to understand spectral methods (after discretization). Let
$\varphi\in \mathcal{D}_\pi$ and consider computing
$$
 \partial ^\alpha\varphi(x_0)=\langle(-1)^{|\alpha|}\partial^\alpha\delta
 _{x_0},\varphi\rangle 
$$
at a point $x_0\in B$. In this case
\begin{align*}
 \partial^\alpha\varphi(x_0)&=\langle(-1) ^{|\alpha|}\partial^\alpha \delta
 _{x_0},\varphi\rangle=\sum_{k\in \mathbb{Z}^N} (-1)^{|\alpha|}(\widetilde{\partial^\alpha\delta
 _{x_0}})_k\hat\varphi _k\\&=\sum_{k\in \mathbb{Z}^N}\partial^\alpha e_k(x_0)\hat\varphi _k=
 \sum_{k\in \mathbb{Z}^N}[(ik)^\alpha\hat\varphi_k]e_k(x_0).
\end{align*}
It is again clear that the main advantages lie in the fact that $\varphi$ is smooth
and that the Fourier coefficients of $\partial^\alpha\delta _{x_0}$ are known
exactly.

The next is an example of integration. Take $x_0<x_1$ in the one dimensional box
$B$. Of interest is the computation (eventually numerically) the integral 
$$
 I(\varphi)=\int _{x_0}^{x_1}\varphi(x)\, dx
$$
between the end points given (which are not necessarily on a numerical
grid). Since $I\in \mathcal{D}'_\pi$ is a zero order distribution, it
is possible compute its Fourier coefficients
$$
 \tilde{I}_k=\langle I,e_k\rangle=\int _{x_0}^{x_1}e_k(x)\,
 dx=\frac{1}{(2\pi)^{1/2}}\frac{1}{ik}[e^{ikx_1}-e^{ikx_0}],
$$
again obtaining an explicit formula. Then
$$
 \int _{x_0}^{x_1}\varphi(x)\, dx=\sum_{k\in \mathbb{Z}}\tilde{I}_k\hat\varphi _k,
$$
will provide a fast converging series representation.
\subsection{A simple Boundary Value Problem}
This section concludes with a simple example that  will make the advantages and basic
principle of this approach apparent. They will reappear in the higher dimensional
context with the appropriate adjustments. Consider the following two point boundary
value problem
\begin{equation}\label{1dbvp}\begin{cases}
 -\partial _{xx}u=f&\text{on }(x_0,x_1)\subset B,\\
 u(x_j)=u_j&\text{for }j=0,1.
\end{cases}\end{equation}
Notice that it will be considered as a problem embedded in the periodicity box $B$
(which will later be discretized by a regular grid). Assume that $f\in
\mathcal{D}_\pi$ be given along with $u_j\in \mathbb{R}$ for $j=0,1$. Choose a
function $\psi\in \mathcal{D}_\pi$ satisfying
$$
 \hat\psi _0=1,\: 1-\psi\in \mathcal{D}_0(B),\: \operatorname{supp}(\psi)\subset
 [x_0,x_1]^\mathsf{c},
$$
and define
$$
 P_\psi f=f-\hat f_0\psi
$$
for the datum $f$ and accordingly for any distribution in $\mathcal{D}'_\pi$. This
way, a function $P_\psi f$ is obtained with vanishing average which coincides with
$f$ on $(x_0,x_1)$. When applied to a general distribution $u$ that is
compactly supported inside the box, this operation produces a modified
distribution $P_\psi u$ which coincides with the original $u$ on its
$\operatorname{supp}(u)$ if, without loss of generality, it is assumed
that $\operatorname{supp}(u)\cap
\operatorname{supp}(\psi)=\emptyset$. Next define the operator $G_\pi$
acting on $f$ via 
$$
 \widehat{G_\pi(f)}_k=\begin{cases} 0,&k=0,\\\frac{\hat f_k}{k^2},&k\neq 0
 \end{cases}
$$
so that
$$
 -\partial _{xx}G_\pi(f)=G_\pi(-\partial _{xx}f)=f-P_0(f),
$$
for $P_0f=\hat f_0e_0$, the orthogonal projection onto average
free functions. A solution of \eqref{1dbvp} can be looked for in the
form 
$$
 u=G_\pi \bigl(P_\psi f \bigr)+v,
$$
where $v$ satisfies
$$
 \partial _{xx}v=0\text{ and } v(x_j)=u_j-G_\pi \bigl(P_\psi f
 \bigr)(x_j),\: j=0,1. 
$$
All that remains is to find two linearly independent elements
$v_0,v_1$ in the kernel of $-\partial _{xx}$ on $(x_0,x_1)$ and look
for $v$ in the form $v=\alpha _0v_0+\alpha _1v_1$. In order for the
boundary conditions to be satisfied, one needs that 
$$
 \alpha _0v_0(x_j)+\alpha_1v_1(x_j)=u_j-G_\pi \bigl(P_\psi f
 \bigr)(x_j)=:\beta _j,\: j=0,1. 
$$
By choosing $v_k=G_\pi(P_\psi\delta _{x_k})$ for $k=0,1$, this leads to
the matrix $M=[v_k(x_j)]_{j,k=0,1}$ with 
\begin{equation}\label{thematrix1d}
 M_{jk}=\langle\delta _{x_j},G_\pi(P_\psi\delta _{x_k})\rangle,
\end{equation}
which is a kind of Green's function ``$M=M(x_j,x_k)$'', $j,k=0,1$. The
crucial observation is that all ingredients $\delta
_{x_j},G_\pi,P_\psi\delta _{x_k}$ allow for spectral representations
in the periodicity interval $B$ (regardless of whether $x_j$ for
$j=0,1$ are or are not grid points after discretization). The
convergence is, however, limited  
by the fact that, in \eqref{thematrix1d}, $\delta_{x_j}$ is of zero
order and that $G_\pi(P_\psi\delta _{x_k})$
is of limited smoothness (slightly better than
$\operatorname{H}^1_\pi$). This, however, can be alleviated by
replacing $P_\psi\delta _{x_k}$ by either $P_\psi\delta _{\tilde x_k}$
with 
$$
 \tilde x_k=x_k+\delta\nu(x_k)\text{ for }\nu(x_k)=(-1)^{k+1}\text{
   and }k=0,1, 
$$
or, even better, by $P_\psi\varphi _{\tilde x_k}$ for a smooth test function
$\varphi _{\tilde x_k}\in \mathcal{D}_0$ supported in a neighborhood $U_{k}$ of
$\tilde x_k$ with $U_k\cap(x_0,x_1)=\emptyset$ and $U_k\cap
\operatorname{supp}(\psi)=\emptyset$ to obtain
$$
 \widetilde M_{jk}=\langle\delta _{x_j},G_\pi\bigl(P_\psi\varphi_{\tilde
   x_k}\bigr)\rangle. 
$$
It is easily checked that $M$ is invertible for $x_0\neq x_1$. Taking
$\tilde x_k$ not too far from $x_k$ and $\varphi _{\tilde x_k}\simeq\delta _{\tilde
  x_k}$, it follows that $\widetilde M\simeq M$ is also invertible. The upshot is,
clearly, that $\widetilde M$ allows for a fast converging
representation of its entries. To conclude this simple example one has that
$$
 u=G_\pi(P_\psi f)+[v_0\: v_1]\widetilde M^{-1}\beta,
$$
for $\beta=[\beta _0\: \beta _1]^\top$.
\begin{rem}
It is to be observed that, after discretization, all basic ingredients $\delta
_{x_j},G_\pi,P_\psi\varphi _{\tilde x_k},\psi$ will have highly accurate grid
representations, even if $x_j,\tilde x_k$ do not lie on the
grid. Owing either to the availability of exact Fourier coefficients
or to their smoothness, the additional discretization
error incurred when going to a finite dimensional representation is as
small as can be hoped for. 
\end{rem}
\section{Discretization}
\subsection{One Dimension}
In order to rip the benefits of the above considerations the interval
$B_1=[-\pi,\pi)$ is discretized at $m\in\mathbb{N}$ (even) equidistant
points $(x^m_j)_{j=0,\dots,m-1}$ where
$$
 x^m_j=-\pi+\frac{2\pi}{m}j,\: j=0,\dots,m-1.
$$
This will be sometimes referred to as the grid $G^m_1$ of size $m$ in
dimension $n=1$. As pointed out in \cite{G062}, the choice of grid has to
be complemented by an appropriate choice of corresponding quadrature
rule $q^m=(q^m_j)_{j=0,\dots,m-1}$ such that
$$
 \langle\mathbf{1}^m,\varphi ^m\rangle _{q^m}=\mathbf{1}^m\cdot
 _{q^m}\varphi^m=q^m\cdot\varphi ^m=\sum _{j=0}^{m-1}\varphi
 ^m_jq^m_j\to\int _{B_1}\varphi(x)\, dx\text{ as
 }m\to\infty,\:\varphi\in \mathcal{D}_\pi,
$$
for the constant function $\mathbf{1}$ with value $1$ and for
$$
 \varphi
 ^m=P_\mathcal{P}(\varphi)=\bigl(\varphi(x^m_j)\bigr)_{j=0,\dots,m-1},
$$
the (physical space) projection of the test function $\varphi$ on the
grid. It is also required that the quadrature rule satisfy
$$
 e^m_j\cdot _{q^m}\bar e^m_k=\delta _{jk},\: -m/2\leq j,k\leq m/2-1,
$$
for the basis vectors $e_j$, $j=-m/2,\dots,m/2-1$, where again the
superscript indicates projection (by evalutation) on the grid.
\begin{deff}
A discretization pair $(x^m,q^m)$ on $B_1$ satsfying the above
properties is called {\em  faithful discretization}.
\end{deff}
The trapezoidal rule, for which it holds that
$$
 q^m=\frac{2\pi}{m}(1,\dots,1),
$$
has this property of preserving the duality pairing and the
orthogonal structure of the continuous setting. Many basic, useful
distributions, such as $\delta _{x_0}$ for any $x_0\in[-\pi,\pi)$,
cannot be directly evaluated at points (short of obtaining a vanishing
projection for all non-grid points $x_0$). It is then better to use an
approximation based on Fourier coefficients and given by
$$
 u^m=P_\mathcal{F}(u)=\sum _{k=-m/2}^{m/2-1}\tilde u_k\bar e^m_k=\sum
 _{k=-m/2}^{m/2-1}\tilde u_k P_\mathcal{P}(\bar e_k),\: u\in\mathcal{D}_\pi.
$$
The reason for this is that, in practice, one often has analytical
knowledge of the coefficients $\tilde u_k$ or the ability to compute
them to a high degree of accuracy.
\begin{rem}\label{delta exact}
Observing that $\delta ^m_{x_0}=\sum _{k=-m/2}^{m/2-1}e_k(x_0)\bar
e_k^m$ and assuming that $x_0=x^m_{j_0}$ is one of the grid points,
one has that
\begin{align*}
 \delta ^m_{x^m_{j_0}}(x^m_j)&=\sum _{k=-m/2}^{m/2-1}e_k(x^m_{j_0})\bar
 e_k(x^m_j)=\sum _{k=-m/2}^{m/2-1}e^{i(j_0-k)\pi}e_{j_0}(x^m_k)\bar
 e_j(x^m_k)e^{i(k-j)\pi}\\ &=e^{i(j_0-j)\pi}\sum _{k=-m/2}^{m/2-1}
 e_{j_0}(x^m_k)\bar e_j(x^m_k)=\frac{m}{2\pi}e^{i(j_0-j)\pi}e_{j_0}^m\cdot
  _{q^m}\bar e_j^m\\&=\frac{m}{2\pi}\delta _{jj_0} 
\end{align*}
since
\begin{equation*}
 e_k(x^m_j)=\frac{1}{\sqrt{2\pi}}e^{-ik\pi+ik\frac{2\pi}{m}j}
 =\frac{1}{\sqrt{2\pi}}e^{i(j-k)\pi}e^{ij(-\pi+\frac{2\pi}{m}k)}=e^{i(j-k)\pi}e_j(x^m_k).
\end{equation*} 
It is seen that $P_\mathcal{F}(\delta _{x_0})$ evaluates exactly (to
the discrete Dirac function) if $x_0$ is a grid point, while, for
$x_0\in[-\pi,\pi)\setminus G^m_1$, it has oscillatory character. In
any case one has that
$$
 \langle\delta ^m_{x_0},\varphi ^m\rangle_{q^m}\to\langle\delta
 _{x_0},\varphi\rangle=\varphi(x_0)\text{ as }m\to\infty,
$$
for any $\varphi\in\mathcal{D}_\pi$, with fast convergence.
\end{rem}
\begin{rem}
The alternating point trapezoidal rule of quadrature given by
$q^m=\frac{2\pi}{m} (2,0,\dots,2,0)$ can also be used instead of the regular
trapezoidal rule as it has been observed to have the required
properties in \cite{G062}. 
\end{rem}
\begin{deff}
The discrete Fourier transform $\mathcal{F}_m:\mathbb{C}^m\to
\mathbb{C}^m$ is defined by
$$
 \mathcal{F}_m(v)=\bigl( v\cdot _{q^m}\bar e^m_k \bigr)
 _{k=-m/2,\dots,m/2-1},\: v\in \mathbb{C}^m.
$$
\end{deff}
\begin{rem}\label{discrete_parseval}
As the discretization is faithful, $\mathcal{F}_m$ is an isometric
isomorphism. In fact it is easy to prove that
$$
 v\cdot _{q^m}\overline{w} =\mathcal{F}_m(v)\cdot
 \overline{\mathcal{F}_m(w)} 
$$
so that Parseval's identity carries over exactly to the discrete
setting. Notice that the standard Euclidean
inner product is used in the right-hand side.
\end{rem}
\begin{prop}\label{convergence}
For a finite order distribution $u\in\mathcal{D}'_\pi$ and a test
function $\varphi\in\mathcal{D}_\pi$, it can be shown (see
\cite[Theorem 4.2]{G062} and the considerations preceding it) that, given
any $M\in\mathbb{N}$, one has that
$$
 |\langle u,\varphi \rangle-u^m\cdot_{q^m}\varphi ^m|\leq
 c(M,u,\varphi)\frac{1}{m^M}.
$$
\end{prop}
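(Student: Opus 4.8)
The plan is to exploit the generalized Parseval identity \eqref{parseval} together with the faithfulness of the discretization (Remark \ref{discrete_parseval}) to reduce the error to a tail estimate of a fast‑converging Fourier series. First I would write, using \eqref{parseval}, the exact pairing as $\langle u,\varphi\rangle=\sum_{k\in\mathbb{Z}}\tilde u_k\hat\varphi_k$, and the discrete pairing as $u^m\cdot_{q^m}\varphi^m=\mathcal{F}_m(u^m)\cdot\overline{\mathcal{F}_m(\overline{\varphi^m})}$ (or the analogous bilinear form), which by the definition of $P_\mathcal{F}$ and $P_\mathcal{P}$ and the orthogonality relations $e^m_j\cdot_{q^m}\bar e^m_k=\delta_{jk}$ collapses to $\sum_{k=-m/2}^{m/2-1}\tilde u_k(\widehat{\varphi^m})_k$, where $(\widehat{\varphi^m})_k$ denotes the $k$‑th discrete Fourier coefficient of the grid function $\varphi^m$. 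So the error splits as
$$
 \langle u,\varphi\rangle-u^m\cdot_{q^m}\varphi^m
 =\sum_{|k|\ge m/2}\tilde u_k\hat\varphi_k
 \;+\;\sum_{k=-m/2}^{m/2-1}\tilde u_k\bigl(\hat\varphi_k-(\widehat{\varphi^m})_k\bigr).
$$
The first sum is the genuine truncation error; the second accounts for aliasing, i.e. the difference between the continuous and discrete Fourier coefficients of $\varphi$ on the retained modes.

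For the truncation sum I would use that $u$ is of finite order, hence (as established in the Preliminaries via a density argument and the embedding \eqref{embed}) $u\in\operatorname{H}^{-s}_\pi$ for some finite $s\ge 0$, so $|\tilde u_k|\le c(1+|k|^2)^{s/2}$ is polynomially bounded, say $|\tilde u_k|\le c(u)(1+|k|)^{s}$. On the other hand $\varphi\in\mathcal{D}_\pi$ is smooth and $2\pi$‑periodic, so repeated integration by parts gives $|\hat\varphi_k|\le c(L,\varphi)(1+|k|)^{-L}$ for every $L\in\mathbb{N}$; concretely $|\hat\varphi_k|\le (1+|k|)^{-L}\,p_L(\varphi)$ up to a constant, by writing $(ik)^L\hat\varphi_k=\widehat{(\partial^L\varphi)}_k$ and using $\|\widehat{\partial^L\varphi}\|_\infty\le\|\partial^L\varphi\|_{\operatorname{L}^1}\le c\,p_L(\varphi)$. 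Choosing $L=M+s+2$ and summing the geometric‑type tail $\sum_{|k|\ge m/2}(1+|k|)^{s-L}$ yields a bound $\le c(M,u,\varphi)\,m^{-(L-s-1)}\le c(M,u,\varphi)\,m^{-M}$, which is the desired decay.

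For the aliasing sum I would invoke the Poisson‑summation/aliasing formula for the trapezoidal rule on the uniform grid $G^m_1$: for a retained mode $|k|<m/2$ one has $(\widehat{\varphi^m})_k=\sum_{\ell\in\mathbb{Z}}\hat\varphi_{k+\ell m}$, so $\hat\varphi_k-(\widehat{\varphi^m})_k=-\sum_{\ell\ne 0}\hat\varphi_{k+\ell m}$, and every index $k+\ell m$ with $\ell\ne 0$ satisfies $|k+\ell m|\ge m/2$. Hence $|\hat\varphi_k-(\widehat{\varphi^m})_k|\le\sum_{\ell\ne 0}c(L,\varphi)(1+|k+\ell m|)^{-L}$, and multiplying by $|\tilde u_k|\le c(u)(1+|k|)^s$ and summing over $|k|<m/2$ produces, after the same choice $L=M+s+2$, a double sum that is again $\le c(M,u,\varphi)\,m^{-M}$ — the key point being that the combined weight $(1+|k|)^s(1+|k+\ell m|)^{-L}$ over the relevant range is dominated by $m^{-(L-s-1)}$ times an absolutely convergent sum. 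Combining the two estimates gives the claim with $c(M,u,\varphi)$ absorbing the order $s=s(u)$ of the distribution and the seminorm $p_L(\varphi)$ with $L=M+s+2$.

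The main obstacle I anticipate is the aliasing step: one must verify that the alternating‑point/trapezoidal faithful quadrature genuinely produces the clean aliasing identity $(\widehat{\varphi^m})_k=\sum_{\ell}\hat\varphi_{k+\ell m}$ on the retained modes (this is exactly the orthogonality requirement $e^m_j\cdot_{q^m}\bar e^m_k=\delta_{jk}$ together with $q^m=\tfrac{2\pi}{m}(1,\dots,1)$), and then to organize the resulting double sum $\sum_{|k|<m/2}\sum_{\ell\ne0}(1+|k|)^s(1+|k+\ell m|)^{-L}$ so that the polynomial growth from $u$ is beaten by the decay from $\varphi$ uniformly in $m$; this is a standard but slightly delicate index bookkeeping. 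Everything else — the generalized Parseval identity, the finite‑order/Bessel‑potential bound, and the smooth decay of $\hat\varphi_k$ — is either quoted from the Preliminaries or a one‑line integration by parts, so the real work is concentrated there. Since the paper defers to \cite[Theorem 4.2]{G062}, I would present this as the self‑contained version of that argument in the periodic setting.
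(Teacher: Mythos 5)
The paper itself does not prove this Proposition; it defers to \cite[Theorem 4.2]{G062} and remarks that the argument transfers to the periodic setting, so there is no in-paper proof to compare against. Your proposal supplies a correct, self-contained argument. The split into a truncation tail $\sum_{|k|\ge m/2}\tilde u_k\hat\varphi_k$ and an aliasing contribution $\sum_{|k|<m/2}\tilde u_k\bigl(\hat\varphi_k-\mathcal{F}_m(\varphi^m)_k\bigr)$ is the natural decomposition given that $u^m=P_\mathcal{F}(u)$ truncates in frequency while $\varphi^m=P_\mathcal{P}(\varphi)$ samples in physical space; the starting identity $u^m\cdot_{q^m}\varphi^m=\sum_{|k|<m/2}\tilde u_k\,\mathcal{F}_m(\varphi^m)_k$ is in fact immediate from bilinearity of $\cdot_{q^m}$ and the definition of $\mathcal{F}_m$, so the detour through Remark~\ref{discrete_parseval} is not even needed. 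The finite-order bound $|\tilde u_k|\le c(u)(1+|k|)^{s}$ from $u\in\operatorname{H}^{-s}_\pi$ together with the rapid decay $|\hat\varphi_k|\le c(L,\varphi)(1+|k|)^{-L}$ then closes both halves.

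The step you flag as the main obstacle is in fact the most mechanical one. For the trapezoidal rule $q^m=\tfrac{2\pi}{m}(1,\dots,1)$ on the grid $x^m_j=-\pi+2\pi j/m$ with $m$ even, a direct computation shows $e^m_{k'}\cdot_{q^m}\bar e^m_k$ vanishes unless $k'-k$ is a multiple of $m$, in which case it equals $1$ (the $(-1)^{k'-k}$ phase from the $-\pi$ offset cancels because $m$ is even), so $\mathcal{F}_m(\varphi^m)_k=\sum_{\ell\in\mathbb{Z}}\hat\varphi_{k+\ell m}$ holds exactly for the retained modes. The map $(k,\ell)\mapsto k+\ell m$ on $\{|k|<m/2\}\times(\mathbb{Z}\setminus\{0\})$ is injective with image contained in $\{j:|j|\ge m/2\}$, and on that set $|k|\le|k+\ell m|$, so the double sum $\sum_{|k|<m/2}\sum_{\ell\ne 0}(1+|k|)^s(1+|k+\ell m|)^{-L}$ is dominated by the single tail $\sum_{|j|\ge m/2}(1+|j|)^{s-L}$ you already estimated for the truncation term; with $L=M+s+2$ both pieces are of order $m^{-(M+1)}$. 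The one caveat worth recording is that this clean aliasing formula is specific to the trapezoidal rule; the alternating-point rule the paper also mentions would require re-deriving the aliasing pattern before the argument applies verbatim.
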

Notice that, while this result is proved in \cite{G062} only for compactly
supported distribution and compactly supported test functions, the
same arguments apply in the current context since the compact support
condition is not needed in the periodic context where no boundary is
present and, hence, no boundary effects (read convergence slowdown due
to boundary mismatch) can occur. 
\begin{rem}\label{kernel truncation}
At the continuous level, one can think of the series
$$
 i(x,y)=\frac{1}{2\pi}\sum _{k\in \mathbb{Z}}e^{ik(x-y)},\: x,y\in[-\pi,\pi),
$$
as the (generalized, since it converges in the sense of distributions
only) kernel $i$ of the identity map on $\operatorname{L}^2_\pi$ since
clearly 
$$
 \varphi (x)=\frac{1}{2\pi}\sum _{k\in \mathbb{Z}}e^{ikx}\int
 _{-\pi}^{\pi}e^{-iky}\varphi(y)\, dy\text{``}=\text{''}\int _{-\pi}^{\pi}i(x,y)\varphi(y)\,
 dy,\: \varphi\in \mathcal{D}_\pi.
$$
In this context, a discretization which respects the duality
pairing and the orthogonality structure as decribed above yields
``natural'' spectral discretizations for a variety of important
operators which will be exploited later. In particular, it delivers such a
discretization $i^m$ for the identity map given by
\begin{multline}\label{identityrep}
  (\delta^m_x|\delta_y^m)_{q^m}:=\langle \delta ^m_x,\bar\delta ^m_y \rangle _{q^m}=\sum
  _{k,\tilde k=-m/2}^{m/2-1}e_k(x)\bar e_{\tilde{k}}(y)\langle \bar
  e_k^m,e^m_{\tilde{k}}\rangle _{q^m}\\= \sum _{k,\tilde{k}=-m/2}^{m/2-1}e_k(x)\bar
  e_{\tilde{k}}(y)\delta _{k \tilde{k}}=\frac{1}{2\pi}\sum
  _{k=-m/2}^{m/2-1}e^{ik(x-y)}=i^m(x,y), 
\end{multline}
which is clearly the truncation of the series representation of the
kernel $i$ itself. Notice that $x,y$ need not be grid points and
that the approximation is thus ``grid blind'' and the error incurred
is caused only by truncation of the series and by evaluation of the
exponential function at the points of interest. If the kernel is
evaluated on the grid points only, then it coincides with the kernel
of discrete identity map, i.e. with the identity matrix
$$
 i^m(x^m_j,x^m_k)=\delta_{jk}.
$$
\end{rem}
\subsection{Higher dimensions}
In higher dimensions, the periodicity box $B=B_N$ is discretized
analogously in each direction by equidistant points to obtain the grid
$$
 G^m=G^m_n=\underset{N\text{-times}}{\underbrace{G^m_1\times\cdots\times
     G^m_1}}, 
$$
with corresponding quadrature rule
$q^m=\bigl(\frac{2\pi}{m}\bigr)^N\mathbf{1}^m$, where now,
$\mathbf{1}^m$ is thought of as a vector of length $m^N$. This way, a
faithful discretization respecting duality pairing and orthogonality
is obtained. In particular, it follows that
$$
 \langle e^m_k,\bar e^m_{\tilde{k}} \rangle _{q^m}=\delta _{k\tilde k},
$$
for $e_k(x)=\frac{1}{(2\pi)^{N/2}}e^{ik\cdot x}$ for $x\in\mathbb{R}^N$,
$k\in\mathbb{Z}^N$ and, again, $e_k^m=e_k\big |_{G^m}$. Dirac delta
functions are approximated by tensor products
$$
 \delta^m_{x_0}=\delta^m_{x^1_0}\otimes\cdots\otimes\delta^m_{x^N_0},
$$
of the corresponding one dimensional representations
$\delta^m_{x^j_0}$, $j=1,\dots,N$ where
$x_0=(x_0^j)_{j=1,\dots,N}$. As far as test functions $\varphi _{x_0}$
supported in a neighborhood of a point $x_0\in B$ go, many choices
can be made. The specifics will be given in the numerical experiments
performed later. For now it is only important to know that such test
functions can be given explicitly by an analytical formula which
allows for accurate evaluation anywhere.

Consider now a general pseudodifferential operator $a(x,D)$ with
symbol $\bigl( a(x,k)\bigr) _{k\in \mathbb{Z}^N}$ defined by
$$
 a(x,D)\varphi =\frac{1}{(2\pi)^{N/2}}\sum _{k\in \mathbb{Z}^N}e^{ik\cdot
   x}a(x,k)\hat\varphi _k=\sum _{k\in \mathbb{Z}^N}e_k(x)a(x,k)\hat\varphi _k,
$$
and where $a(\cdot,k):B\to \mathbb{C}$ is assumed to be smooth and
periodic for each $k\in \mathbb{Z}^N$. Its Schwartz kernel is given by
$$
 k_a(x,y)=\frac{1}{(2\pi)^{N}}\sum _{k\in \mathbb{Z}^N}e^{i
   k\cdot(x-y)}a(x,k)=\sum _{k\in \mathbb{Z}^N}e_k(x)e_k(-y)a(x,k) ,
$$
for which one has that
$$
 a(x,D)\varphi=\langle k_a(x,\cdot),\varphi \rangle\text{ for }\varphi
 \in \mathcal{D} _\pi.
$$
More suggestively one can write that
$$
 k_a(x,y)=\bigl(a(x,D)\delta_y|\delta_x\bigr),
$$
justified by the validity of the formal Parseval's identity
$$
 \bigl(a(x,D)\delta_y|\delta_x\bigr)=\bigl(
 \widehat{a(x,D)\delta_y}|\widehat{\delta_x}\bigr)=\sum
 _{k\in\mathbb{Z}^N}a(x,k)e_k(-y)\overline{e_{-k}(x)}=\sum_{k\in\mathbb{Z}^N}e_k(x)e_k(-y)a(x,k) 
$$
If $a(x,\cdot)$ is polynomially bounded (for each $x$), convergence in
the sense of distributions can be established. For well-known classes
of symbols \cite{Tay96b}, it can be shown that $k_a$ is smooth away
from the diagonal $[x=y]$, where cancellations are responsible for the 
faster convergence of the series. This is the case for general
differential operators and the corresponding solutions operators
appearing in common boundary value problems, for instance. It turns
out that, what was observed above for 
$a\equiv 1$ (leading to the identity map) in one space dimension, is
valid for general pseudodifferential operators.
\begin{thm}\label{discretepseudos}
Given a pseudodifferential operator $a(x,D)$ with kernel $k_a$, it is
natural to approximate it by the truncated series expansion
$$
 k^m_a(x,y)=\frac{1}{(2\pi)^{N/2}}\sum _{k\in \mathbb{Z}^N_m}a(x,k)e^{ik\cdot(x-y)},
$$
where $\mathbb{Z}^N_m=\big\{ k\in \mathbb{Z}^N\, :\,
k_i=-m/2,\dots,m/2-1\text{ for }i=1,\dots,n\big\}$. 
In this case one has that
\begin{equation}\label{kernelrep}
 k^m_a(x,y)=\bigl(a^m(x,D)\delta_y^m|\delta^m_x\bigr)_{q^m},
\end{equation}
for $a^m(x,D)=\mathcal{F}_m^{-1}a^m(x,\cdot)\mathcal{F}_m$ and
$$
 a^m(x,k)=a(x,k),\: k\in \mathbb{Z}^N_m.
$$
\end{thm}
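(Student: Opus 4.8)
The plan is to reduce the identity \eqref{kernelrep} to the exact discrete Parseval relation of Remark~\ref{discrete_parseval} together with the computation of the discrete Fourier coefficients of a Dirac mass, in exact parallel with the formal Parseval computation of $\bigl(a(x,D)\delta_y|\delta_x\bigr)$ used above to identify the continuous kernel $k_a$. So the whole statement is really a bookkeeping identity in the finite-dimensional space $\mathbb{C}^{m^N}$.

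First I would fix $x\in B$ and regard $a^m(x,D)=\mathcal{F}_m^{-1}\,a^m(x,\cdot)\,\mathcal{F}_m$ as the finite-dimensional Fourier multiplier with symbol $\bigl(a(x,k)\bigr)_{k\in\mathbb{Z}^N_m}$; by its very definition it acts on the discrete Fourier side by pointwise multiplication, so $\mathcal{F}_m\bigl(a^m(x,D)v\bigr)_k=a(x,k)\,\mathcal{F}_m(v)_k$ for $k\in\mathbb{Z}^N_m$. Note that only the values $a^m(x,k)=a(x,k)$ with $k\in\mathbb{Z}^N_m$ ever enter, so no smoothness or periodicity of $a(\cdot,k)$ is needed for this identity itself; those hypotheses matter only for the convergence and off-diagonal smoothness of the untruncated series.

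Second I would compute $\mathcal{F}_m(\delta^m_y)$. Since $\delta^m_y=P_{\mathcal F}(\delta_y)=\sum_{k\in\mathbb{Z}^N_m}\langle\delta_y,e_k\rangle\,\bar e^m_k$ is the tensor product of the one-dimensional spectral Diracs and $\mathcal F_m$ factors correspondingly, a short computation with the faithfulness relation $e^m_j\cdot_{q^m}\bar e^m_k=\delta_{jk}$, in the spirit of Remark~\ref{delta exact}, gives $\mathcal{F}_m(\delta^m_y)_k=\bar e_k(y)$ for every $k\in\mathbb{Z}^N_m$ — the exact discrete analogue of $\widehat{\delta_y}_k=\langle\delta_y,\bar e_k\rangle=\bar e_k(y)$. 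In particular this sequence is supported precisely on $\mathbb{Z}^N_m$, so the truncation built into the definition of $k^m_a$ is automatic, and combining with the first step, $\mathcal{F}_m\bigl(a^m(x,D)\delta^m_y\bigr)_k=a(x,k)\,\bar e_k(y)$ on $\mathbb{Z}^N_m$. Then, applying Remark~\ref{discrete_parseval} in the form $(u|v)_{q^m}=u\cdot_{q^m}\bar v=\mathcal{F}_m(u)\cdot\overline{\mathcal{F}_m(v)}$ with $u=a^m(x,D)\delta^m_y$ and $v=\delta^m_x$ yields
$$
 \bigl(a^m(x,D)\delta^m_y\,\big|\,\delta^m_x\bigr)_{q^m}
 =\sum_{k\in\mathbb{Z}^N_m}a(x,k)\,\bar e_k(y)\,\overline{\bar e_k(x)}
 =\sum_{k\in\mathbb{Z}^N_m}a(x,k)\,e_k(x)\,\bar e_k(y).
$$
The remaining step is purely algebraic: $e_k(x)\,\bar e_k(y)=\frac{1}{(2\pi)^N}e^{ik\cdot(x-y)}$ — the same normalization already appearing in $k_a$ and, for $a\equiv1$ and $N=1$, in $i^m$ of Remark~\ref{kernel truncation} — so the right-hand side is exactly the partial sum over $\mathbb{Z}^N_m$ of $k_a(x,y)=\frac{1}{(2\pi)^N}\sum_{k\in\mathbb{Z}^N}a(x,k)e^{ik\cdot(x-y)}$, i.e.\ $k^m_a(x,y)$, which is \eqref{kernelrep}. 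Exactly as in Remark~\ref{kernel truncation}, nothing in this computation forces $x$ or $y$ onto the grid, and evaluating $k^m_a$ at grid points recovers the matrix of the discrete operator $a^m(x,D)$.

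There is no serious obstacle here: the statement is essentially the finite-dimensional shadow of the formal Parseval identity for the continuous kernel. The only points requiring genuine care are the conjugation/reindexing conventions hidden in the definition of $\mathcal{F}_m$, which must be tracked so that $\mathcal{F}_m(\delta^m_y)_k$ comes out as $\bar e_k(y)$ rather than $e_k(y)$, matching the continuous convention; the standard, harmless aliasing at the endpoint index $\pm m/2$, which is absorbed by the faithfulness relation; and carrying the single normalization constant $(2\pi)^{-N}$ correctly through the product $e_k(x)\bar e_k(y)$.
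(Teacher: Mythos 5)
Your proof is correct and follows essentially the same route as the paper: apply the discrete Parseval identity of Remark~\ref{discrete_parseval}, compute $\mathcal{F}_m(\delta^m_y)_k=\bar e_k(y)$, and combine with the multiplier action of $a^m(x,D)$ on the discrete Fourier side (the paper presents this first in one dimension and then extends to $N$ dimensions via tensor products, whereas you run the computation directly in $N$ dimensions, but the content is identical). Incidentally, your computation correctly yields the factor $(2\pi)^{-N}$, consistent with the continuous $k_a$ and with $i^m$ in Remark~\ref{kernel truncation}; the $(2\pi)^{-N/2}$ in the theorem's displayed definition of $k^m_a$ is a typo in the paper.
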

\begin{proof}
In one dimension, the extension of \eqref{identityrep} to general
symbols amounts to
$$
 \bigl(a^m(x,D)\delta_y^m|\delta^m_x\bigr)_{q^m}=\bigl(
 \mathcal{F}_m[a^m(x,D)\delta^m_y]\big
 |\mathcal{F}_m(\delta_x^m)\bigr) _{q^m}
 =\sum _{k\in\mathbb{Z}^N_m}e_k(-y)a(x,k)e_k(x)=k^m_a(x,y),
$$
since the term $\delta _{k\tilde k}$ in \eqref{identityrep} is simply
replaced by $a(x,\tilde k)\delta _{k\tilde k}$. The rest follows from
this and the fact that, in higher dimensions, one has that
\begin{align*}
 \delta_x^m&=\delta^m_{x_1}\otimes\cdots\otimes \delta^m_{x_n}\text{
             and}\\
 e_k(z)&=e_{k_1}(z_1)\otimes\cdots\otimes e_{k_n}(z_n),\: z\in \mathbb{R}^n.
\end{align*}
\end{proof}
This simple observation is quite useful and shows how to produce grid
independent ``spectral'' approximations of operators through an
approximation of their kernels. The structure of the kernel made
apparent in \eqref{kernelrep} provides a blue print as to how to
obtain numerical approximations to kernels of discrete operators
$K^m$ by simply computing $(\delta^m_x|K^m
\delta^m_y)_{q^m}=(K^m \delta^m_y|\delta^m_x)_{q^m}$ (the two coincide
in the real case, which always applies in the examples considered here). This is of 
interest when $K^m$ is, for instance,  the numerical inverse of the
discretization $A^m$ of an operator $A$ for which no analytical inverse
is available.
\begin{rem}
For solution operators, the convergence of the series
can, in general, be quite slow even if it is stronger than in the
sense of distributions. This is due to the (mildly) singular behavior
of the kernel on the diagonal and typically requires special care in
the numerical evaluation process. Representation \eqref{kernelrep},
however, suggests natural ways in which to do this by regularization
of the kernel through
$$ 
 \bigl(\delta^m_x|a^m(x,D)\varphi ^m_{\tilde y}\bigr),
$$
where $\mathcal{D}_\pi\ni\varphi _{\tilde y}\simeq\delta _{\tilde y}$ and $\tilde
y\simeq y$ is conveniently located. In some cases, this modification can
be carried out to obtain an alternate exact representation by a smooth
kernel with no approximation involved. See the boundary value problem
example in the next section.
\end{rem}
\section{Two Dimensional Examples}
Two examples in two space dimensions are presented here which
illustrate the benefits of the proposed approach.
\subsection{Integration}\label{integral}
Consider a (smooth or piecewise smooth) domain $\Omega\subset B$ and
the numerical task of approximating the integral
$$
 I(\varphi)=\int _\Omega \varphi(x)\, dx,\: \varphi\in\mathcal{D}_\pi,
$$
of a smooth function $\varphi$. Since $I$ is a finite order
distribution, one has that
$$
 I=\sum _{k\in\mathbb{Z}^2}\langle I,e_k\rangle \bar e_k=\sum
 _{k\in\mathbb{Z}^2}\tilde{I}_k\bar e_k, 
$$
and $I(\varphi)=\sum _{k\in\mathbb{Z}^2}\tilde{I}_k\hat\varphi _k$. If
$\tilde{I}_k$ can be computed /approximated accurately by
$\tilde{I}^m_k$ on $G^m$, then a numerical quadrature $I^m$ for
integration over $\Omega$ could be obtained by setting
$$
 I^m(\varphi ^m)=\sum _{k\in\mathbb{Z}^2_m}\tilde{I}^m_k\hat
 \varphi^m_k, 
$$
where $\hat\varphi ^m_k=\langle\varphi ^m,\bar e^m_k\rangle
_{q^m}=\mathcal{F}_m(\varphi)_k$ 
can be computed using the Fast Fourier transform. The notation
$\mathbb{Z}^2_m$ is used, as before, for the appropriate set of indeces
corresponding to the discretization level considered. While it appears
that the problem of computing $I(\varphi)$ has simply been replaced by
that of evaluating $I(e_k)$ for $k\in\mathbb{Z}^2_m$, the analytical
knowledge of the bases functions and of their properties becomes
useful. Indeed for $k=(0,0)$ one has
\begin{align}\label{intcoeff1}\notag
 \tilde{I}_0&=\frac{1}{2\pi}\int _\Omega\, dx=\frac{1}{4\pi}\int
 _\Omega\operatorname{div}\begin{pmatrix} x_1\\x_2\end{pmatrix}\,
 dx=\frac{1}{4\pi}\int _\Gamma (x_1\nu _1+x_2\nu _2)\, d\sigma
 _\Gamma(x)\\ &=\frac{1}{4\pi}\int _0^{2\pi}\bigl[ \gamma_1(t)\dot
                \gamma_2(t)-\dot \gamma _1(t)\gamma _2(t)\bigr]\, dt,
\end{align}
where $\Gamma=\partial\Omega$, $\nu$ is the outward unit normal to
$\Gamma$, and $\bigl( \gamma _1(\cdot),\gamma _2(\cdot)\bigr)$ is a
parametrization of $\Gamma$. Here it assumed for simplicity that
$\Gamma$ is connected. If, on the other hand, $k\neq 0$, then
\begin{align}\label{intcoeff2}\notag
 \tilde{I}_k&=\frac{1}{2\pi}\int _\Omega e^{ik\cdot x}\,
  dx=\frac{1}{2\pi |k|^2}\int _\Omega-\Delta e^{ik\cdot x}\,
  dx\\\notag&=-\frac{1}{2\pi |k|^2}\int _\Gamma \nu\cdot\nabla e^{ik\cdot x}\,
  d\sigma _\Gamma=-\frac{i}{2\pi |k|^2}\int _\Gamma
  [k\cdot\nu]e^{ik\cdot x}\,  d\sigma _\Gamma\\&=\frac{i}{2\pi
  |k|^2}\int  _0^{2\pi}\bigl[k_2\dot\gamma _1(t)-k_1\dot
  \gamma _2(t)\bigr]e^{ik\cdot\gamma(t)}\, dt.
\end{align}
Thus, given a representation of $\Omega$ via its boundary $\Gamma$,
either as a list of points (from which the relevant geometric
quantities can be computed) or via an analytic expression (often
available even in pratice), the computation reduces to that of a
periodic one dimensional integral which can be performed to high
accuracy as already noted earlier. The advantage of this approach is
that the integrand lives on $B$, or on $G^m$, and only a simple
discrete representation $\Gamma ^n$ of $\Gamma$ is needed in
order to perform the calculation. Notice that the grids $G^m$ and
$\Gamma ^n$ do not need to have any relation whatsoever to
one another. In fact, when $u$ is smooth, $m$ can be kept small while 
$n$ will need to be chosen large in order to get a good approximation
of the highly oscillatory (in general) line integral. The advantage
clearly lies in the line integral being one dimensional.
\subsection{Boundary Value Problems}
Let again $\Omega$ be a smooth domain inside the box $B$ and consider
the classical boundary value problems
\begin{equation}
\begin{cases}-\Delta u=f&\text{ in }\Omega,\\u=g&\text{ on
  }\Gamma,\end{cases}
\end{equation}
and
\begin{equation}
\qquad \begin{cases}-\Delta u=f&\text{ in }\Omega,\\\partial _\nu
  u=g&\text{ on }\Gamma,\end{cases}  
\end{equation}
where it can be assumed that the data are given as $f:B\to \mathbb{R}$
and $g:\Gamma\to \mathbb{R}$. Using
$$
 G(x,y)=G(x-y)=\frac{1}{2\pi}\log\bigl( |x-y|\bigr)\text{ for }x,y\in
\mathbb{R}^2,
$$
and the classical Green's identity
\begin{equation}\label{green}
\int _\Omega (u\Delta G-G\Delta u)\, dx=\int _\Gamma (u\partial _\nu
G-G\partial _\nu u)\, d\sigma _\Gamma,
\end{equation}
solution representations can be obtained from
\begin{align*}
 u(x)&=\int _\Omega G(x,y)f(y)\, dy-\int _\Gamma G(x,y)\partial _\nu
       u(y)\, d\sigma _\Gamma(y)+\int g(y)\partial _\nu G(x,y) \,
       d\sigma _\Gamma(y),\\
 u(x)&=\int _\Omega G(x,y)f(y)\, dy+\int _\Gamma u(y)\partial _\nu
       G(x,y)\, d\sigma _\Gamma(y)-\int G(x,y)g(y)\,
       d\sigma _\Gamma(y),
\end{align*}
once the boundary functions $u$ and $\partial _\nu u$ are recovered,
depending on whether one considers the Neumann or Dirichlet problem,
respectively. While the single and double layer potentials
terms
\begin{align}\label{players}
 \mathcal{S}(u)(x)&=\int _\Gamma G(x,y)\partial _\nu u(y)\, d\sigma
 _\Gamma(y),\: x\in \mathbb{R}^2 \setminus \Gamma,\\
 \mathcal{D}(u)(x)&=\int _\Gamma u(y)\partial _\nu G(x,y)\, d\sigma
 _\Gamma(y),\: x\in \mathbb{R}^2 \setminus \Gamma.
\end{align}
are important to understand and will appear later for their mapping
properties, the construction of solutions, both analytical and
numerical, presented here will proceed slightly differently. The
following facts \cite{Tay96b} will be useful
\begin{align}\label{jumprelations}
 S(u)(x)&=\lim _{\Gamma\not\ni\tilde x\to x}\mathcal{S}(u)(\tilde
  x)=\int _\Gamma G(x,y)\partial _\nu u(y)\, d\sigma _\Gamma(y),\:
  x\in \Gamma,\\
 \partial _{\nu\pm}S(u)(x)&=\lim _{\Omega ^\pm\ni \tilde x\to
  x}\partial _{\nu(\tilde x)}S(u)(\tilde x)=\mp \frac{1}{2}u(x)+N(u)(x),\: x\in
                \Gamma,
\end{align}
where $\Omega ^+=\Omega$ and $\Omega ^-= \mathbb{R}^2 
\setminus\overline{\Omega}$, respectively, and the normal to $\Gamma$ is
extended continuously in a neighborhood of $\Gamma$, and 
$$
 N(u)(x)=\int _\Gamma u(y)\partial _{\nu(x)} G(x,y)\, d\sigma _\Gamma(y),\:
 x\in \Gamma.
$$
Observe that that the function $G(\cdot,y)$ is clearly a harmonic
function in $\Omega$ for any $y\in B \setminus \Omega$ and for any
fundamental solution $G$. Now consider the Dirichlet problem above and
the shifted Neumann problem given by
\begin{equation}
\qquad \begin{cases}u-\Delta u=f&\text{ in }\Omega,\\\partial _\nu
  u=g&\text{ on }\Gamma,\end{cases}  
\end{equation}
so as to make the problem uniquely solvable. For the Dirichlet problem therefore take
$G^D_\pi(x,y)$ to be the Green's function for the periodicity box $B$
characterized by its symbol 
$$
 \hat G^D_\pi(k)=\begin{cases}0,& k=0,\\
 \frac{1}{|k|^2},&0\neq k\in \mathbb{Z}^2,
 \end{cases}
$$
and, for the Neumann problem, $G^N_\pi$ with symbol
$$
 \hat G^N_\pi(k)=\frac{1}{1+|k|^2},\: k\in \mathbb{Z}^2.
$$
If $f$ is a mean zero function, i.e. if $\hat f_0=0$, then $G^D_\pi*f=\int _B
G^D_\pi(\cdot,y)f(y)\, dy$ satisfies
$$
 -\Delta G^D_\pi*f=f\text{ in }\Omega,
$$
as desired. One also has that
$$
 G^D_\pi*\Bigl(-\Delta u \Bigr)=u -P_0(u), 
$$
where $P_0=(\cdot |e_0)e_0$ is the orthogonal projection onto the
subspace consisting of constant functions. Similarly for the Neumann
problem where
$$
 (1-\Delta)G^N_\pi*f=f\text{ and }G^N_\pi*\bigl( u-\Delta u\bigr)=u.
$$
A solution to the boundary value problems can therefore be sought in
the form 
$$
 u(x)=G^b_\pi*f(x)+\int _\Gamma G^b_\pi(x,y)h(y)\, d\sigma _\Gamma (y),\:
 x\in\Omega,\: b=D,N,
$$
where the second term is a ``harmonic'' function in $\Omega$ and can be
thought of as a superposition along the boundary of functions in the
kernel of $\Delta_\Omega$ or $1-\Delta_\Omega$, respectively, which generate the
desired boundary behavior for the solution. The function $h$ can indeed 
be determined by the requirement that $u=g$ or $\partial _\nu u=g$ on
the boundary $\Gamma$, respectively, that is by insisting that
\begin{equation*}
 g(x)=\langle \delta _x,u\rangle=\langle \delta
  _x,G^D_\pi*f\rangle+\langle \delta _x,G^D_\pi*(h \delta _\Gamma )\rangle,\:
       x\in \Gamma,
\end{equation*}
and that
\begin{equation*}
 g(x)=\langle -\nu(x)\cdot\nabla \delta _x,u\rangle=-\langle
 \partial_{\nu(x)}\delta _x,G^N_\pi*f\rangle -\langle \partial_{\nu_x}\delta _x,
 G^N_\pi*(h\delta _\Gamma )\rangle,\: x\in \Gamma, 
\end{equation*}
where
$$
 H*(h \delta _\Gamma )=\int _\Gamma
 H(x,y)h(y)\, d\sigma _\Gamma(y),
$$
for $H=G^D_\pi,G^N_\pi$. The above is justified by the fact that
\begin{align*}
 \partial_\nu u(x)&=\langle \delta _x,\partial _\nu u\rangle=\langle
 \delta _x,\sum _{j=1}^2\nu _j \partial_j u\rangle=\sum _{j=1}^2\langle \nu _j
 \delta _x,\partial _j u\rangle\\&=\sum _{j=1}^2\langle \nu_j(x)
 \delta _x,\partial _j u\rangle=-\langle\nu(x)\cdot\nabla \delta
 _x,u\rangle\\&=-\langle\partial _{\nu(x)}\delta _x,u\rangle,\: x\in
                                   \Gamma.
\end{align*}
This yields an equation
$$
 M_b(h)=\check g=\begin{cases} g-\langle \delta _\cdot,G^D_\pi*f \rangle,&b=D,\\
 -g-\langle\nu _\cdot\cdot\nabla \delta_\cdot, G^N_\pi*f \rangle,&b=N,\end{cases}
$$
for an operator $M_b$ on $\Gamma$ given by
\begin{equation}\label{M}
 M_b(h)=\int _\Gamma m_b(x,y)h(y)\, d\sigma _\Gamma(y),\: h:\Gamma\to
 \mathbb{R},
\end{equation}
with kernel function defined by
\begin{equation}\label{kerneldn}
 m_b(x,y)=\begin{cases} m_D(x,y)=\langle\delta
   _x,(-\Delta_\pi)^{-1}P_\psi\delta _y\rangle,&b=D,\\
   m_N(x,y)=\langle \partial _{\nu(x)}\delta
   _x,(1-\Delta_\pi)^{-1}\delta _y\rangle,&b=N,\end{cases}\text{ for
 }x,y\in\Gamma,  
\end{equation}
for the Dirichlet and Neumann problem, respectively. Here the more
transparent notation $(-\Delta_\pi)^{-1}$ and $(1-\Delta_\pi)^{-1}$ are
used for the operation of convolution with $G^D_\pi$ and $G^N_\pi$,
respectively. $P_\psi u$ denotes the projection onto mean
zero functions/distributions given by
\begin{equation}\label{adjust}
 P_\psi u=u-\hat u_0\psi=u- \tilde u_0\psi,
\end{equation}
for a nonnegative function $\psi\in \mathcal{D}_\pi$ satisfying
\begin{equation}\label{meanzeroadj}
  \operatorname{supp}(\psi)\subset \Omega^\mathsf{c}\text{ and
  }\hat\psi _0=1.
\end{equation}
\begin{rem}
Using the suggestive notation 
$$
 d\sigma _\Gamma (y)=|\langle dy,\frac{\partial}{\partial t}\rangle|dt,
$$
for $\langle dy,\frac{\partial}{\partial t}\rangle=\dot \gamma(t)$
when $y=\gamma(t)$ to evoke the validity of 
$$
 \int _\Gamma v(y)\, d\sigma_\Gamma (y)=\int _0^{2\pi} v \bigl( \gamma (t)
 \bigr) |\dot \gamma (t)|\, dt,
$$
for any parametrization $\gamma$ of $\Gamma$ and for any smooth
integrand $v:\Gamma\to \mathbb{R}$, allows for the factor $|\langle
dy,\frac{\partial}{\partial t}\rangle|$ to be assimilated 
into the unknown function $h$ to yield $|\langle
dy,\frac{\partial}{\partial t}\rangle|h$ as the new 
unknown. This is particularly convenient when working at the discrete
level, where one is only eventually interested in the function
$$
 x\mapsto \int _\Gamma G ^b_\pi(x,y)h(y)\, d \sigma_\Gamma(y) =\int
 _0^{2\pi} G^b _\pi(x,y)h(y)|\langle dy,\frac{\partial}{\partial
   t}\rangle|\, dt,\: \Omega\to\mathbb{R}\,  
$$
and the determination of $h$ or $|\langle
dy,\frac{\partial}{\partial t}\rangle|h$ are equivalent.
\end{rem} 
The kernels $m_D$ and $m_N$ in \eqref{kerneldn} have the form
of those considered in the previous section and are of
exactly the same type as in the earlier one dimensional toy boundary
value problem. Just as in that case, $\delta_y$ can be replaced by
$\delta_{\tilde{y}}$ for
$$
 \tilde{y}=y+\delta \nu(y),\: y\in \Gamma,
$$
where $\delta>0$ can be chosen such that a tubular neighborhood 
$$
 T^\delta _\Gamma=\{ x\in B\, |\, d(x,\Gamma)< 2 \delta\}
$$
of $\Gamma$ can be found with well-defined coordinates $(y,s)\in
\Gamma\times(-2\delta,2 \delta)$ satisfying
$$
 x=y+s\nu(y)\text{ for }y=Y(x)\text{ and } s=d(x,\Gamma),
$$
where $Y(x)$ denotes the point on $\Gamma$ closest to $x$.
This corresponds to replacing $\Gamma$ by $\widetilde{\Gamma}=\{
\tilde{y}\, |\, y\in \Gamma\}$ in the evaluation of the kernel (but
not in that of the boundary integral). Notice that latter distinction is
immaterial at the discrete level where the boundary measure is
assimilated in the unknown function $h$ as described above. An even
better choice is obtained by replacing $\delta _y$ by
\begin{equation}\label{tfct}
 \varphi _{\tilde{y}}\in \mathcal{D}_0\text{ with
 }\operatorname{supp}(\varphi _{\tilde{y}})\subset \Omega
 ^\mathsf{c}\text{ and }\operatorname{supp}(\varphi_{\tilde y})\cap
 \operatorname{supp}(\psi)=\emptyset.
\end{equation}
The kernel modification is shown pictorially in Figure \ref{Figure:newkernel}.
\begin{center}
\begin{figure}
\begin{tikzpicture}
 \draw[domain=-10:100] plot[samples=100] ({2+3*cos(\x)}, {3
   *sin(\x)});
 \filldraw ({2+3*sqrt(2)/2},{3*sqrt(2)/2}) circle (1pt);
 \draw[dotted,thick] ({2+3.6*sqrt(2)/2},{3.6*sqrt(2)/2}) circle
(12pt);
 \node at ({2+3*sqrt(2)/2-0.2},{3*sqrt(2)/2-0.2}) {$y$};
 \node at (2,3.2) {$\Gamma$};
 \node at (3,1) {$\Omega$};
 \node at ({2+3.6*sqrt(2)/2},{3.6*sqrt(2)/2+1.2})
 {$\operatorname{supp}(\varphi _{\tilde y})$};
 \draw[->]
 ({2+3.6*sqrt(2)/2},{3.6*sqrt(2)/2+1})--({2+3.6*sqrt(2)/2},{3.6*sqrt(2)/2+0.45});
 \draw[->]
 ({2+3*sqrt(2)/2},{3*sqrt(2)/2})--({2+4.5*sqrt(2)/2},{4.5*sqrt(2)/2}); 
 \filldraw ({2+3.6*sqrt(2)/2},{3.6*sqrt(2)/2}) circle (1pt);
 \node[right] at ({2+3.6*sqrt(2)/2},{3.6*sqrt(2)/2-0.1})
 {$\tilde y$};
 \node[right] at ({2.05+4.5*sqrt(2)/2},{4.5*sqrt(2)/2}) {$\nu _{\Gamma}(y)$};
 \node at (-1.5,2) {$G_\pi(x,y)=\langle \delta_x,(-\triangle _\pi)^{-1} P_\psi
   \delta_y\rangle$};
 \node[rotate=90] at (-1.5,1.5) {$\simeq$};
 \node at (-1.5,1) {$\widetilde{G}_\pi(x,y)=\langle
   \delta_x,(-\triangle _\pi)^{-1} P_\psi \varphi _{\tilde
       y}\rangle$};
\node at (-1.5,0) {$\varphi _{\tilde y}\simeq
    \delta_{\tilde y}\simeq \delta_y$};
\end{tikzpicture}
\caption{\label{Figure:newkernel} A pictorial illustration of the proposed kernel construction.}
\end{figure}
\end{center} 
The upshot is that the operator $M_b$ with singular kernel is
replaced by the operator $\widetilde{M}_b$ with smooth kernel given by
$$\begin{cases}
 \widetilde{m}_D(x,y)=\langle \delta _x,(-\Delta_\pi)^{-1} P_\psi\varphi
 _{\tilde{y}}\rangle,&b=D,\\
 \widetilde{m}_N(x,y)=\langle \partial _{\nu(x)}\delta
 _x,(1-\Delta_\pi)^{-1}\varphi _{\tilde{y}}\rangle,&b=N,
\end{cases}\text{ for }x,y\in \Gamma.
$$
By choosing $\varphi _{\tilde{y}}$ localized enough (read close to a
Dirac delta function) it follows that
$$
 M_b-\widetilde{M}_b\simeq 0,
$$
in the strong operator sense. Notice that the projection procedure
\eqref{adjust} ensures that the support of $P_\psi\varphi
_{\tilde{ y}}$ lies completely outside of $\Omega$ and does thus still
generate functions in the kernel of $\Delta_\Omega$.
\begin{rem}
The operator $M_b$ can be shown to be smoothing of one degree of
differentiability in the Dirichlet case, and of none in the Neumann
case. For a proof based on symbol analysis see e.g. \cite{Tay96b}.
\end{rem}
\begin{rem}
While it is often convenient to work with an explicit fundamental
solution for $-\Delta$ and use it in order to derive the necessary
boundary kernels (to be used in a numerical implementation of boundary
integral type), the approach described above does not rely on the
explicit knowledge of a Green's function. Indeed at the discrete
level, the kernel functions, $G^D_\pi=(-\Delta _\pi)^{-1}$ and
$G^N_\pi=(1-\Delta_\pi)^{-1}$ in the examples, 
can be replaced by $(A^m_D)^{-1}$ and $(A^m_N)^{-1}$ for any
discretizations $A^m_b$  to the grid $G^m$ of a differential
operator $A_{b}$ obtained by spectral or finite difference
methods for $b=D,N$. In the above example $A^m$ would be a 
standard spectral or finite difference approximations of the periodic
$-\Delta$ and $1-\Delta$ operators on the box $B$. This opens the
door to applying the method 
to nonconstant coefficient operators and to constant coefficient
operators for which no explicit Green's function or symbol is available.
\end{rem}
Next an illustrative analytical result is proved in the Dirichlet
case which will play an important role in obtaining invertibility
results for the numerical schemes derived later.
\begin{lem}
The operator $M_D$ defined in \eqref{M} with kernel $m_D$ given by
\eqref{kerneldn} is invertible.
\end{lem}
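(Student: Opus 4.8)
The plan is to combine elliptic (Fredholm) theory on the curve $\Gamma$ with a uniqueness argument. First note that, since $\widehat{\delta_y}_0=e_0(y)=\tfrac{1}{2\pi}$ is independent of $y$, one has $(-\Delta_\pi)^{-1}P_\psi\delta_y=G^D_\pi(\cdot-y)-\tfrac{1}{2\pi}(-\Delta_\pi)^{-1}\psi$, so that
$$ M_D h(x)=\int_\Gamma G^D_\pi(x-y)\,h(y)\,d\sigma_\Gamma(y)-\frac{1}{2\pi}\bigl[(-\Delta_\pi)^{-1}\psi\bigr](x)\int_\Gamma h(y)\,d\sigma_\Gamma(y),\qquad x\in\Gamma . $$
The first term is the periodic single-layer potential restricted to $\Gamma$; since $G^D_\pi(z)=-\tfrac{1}{2\pi}\log|z|+r(z)$ with $r$ smooth, it differs by a smoothing operator from the classical single-layer potential and is therefore an elliptic, formally self-adjoint pseudodifferential operator of order $-1$ on the closed curve $\Gamma$ (self-adjoint because $G^D_\pi$ is even; order $-1$ and elliptic as in \cite{Tay96b}). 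The second term is rank one (its kernel depends only on $x$), hence smoothing. Consequently $M_D$ is an elliptic $\Psi$DO of order $-1$ on $\Gamma$ with vanishing Fredholm index, so $M_D\colon H^s(\Gamma)\to H^{s+1}(\Gamma)$ is invertible for every $s$ as soon as it is injective. The remaining and real task is injectivity.

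Assume $M_Dh=0$ and set $w=(-\Delta_\pi)^{-1}P_\psi(h\delta_\Gamma)$ and $c=\widehat{(h\delta_\Gamma)}_0=\tfrac{1}{2\pi}\int_\Gamma h\,d\sigma_\Gamma$. Since $P_\psi(h\delta_\Gamma)=h\delta_\Gamma-c\psi$ has mean zero, $-\Delta_\pi w=h\delta_\Gamma-c\psi$ on $B$; and because $\operatorname{supp}(\psi)\subset\Omega^\mathsf{c}$ lies in the open set $\Omega'=B\setminus\overline\Omega$ (we may assume it disjoint from $\Gamma$), this shows $w$ is harmonic in $\Omega$ and solves $-\Delta w=-c\psi$ in $\Omega'$. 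By the mapping properties of $(-\Delta_\pi)^{-1}$ and the embedding \eqref{embed}, $w$ is continuous on $B$, and by elliptic regularity it is smooth up to $\Gamma$ from either side; moreover its trace on $\Gamma$ equals $M_Dh=0$. Uniqueness for the Dirichlet problem on $\Omega$ then forces $w\equiv 0$ on $\overline\Omega$, and since $\widehat w_0=0$ we get $\int_{\Omega'}w=\int_B w=0$.

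It remains only to show $c=0$: once this is known, $w$ is harmonic in $\Omega'$ with zero trace on $\partial\Omega'=\Gamma$, hence $w\equiv 0$ on $\overline{\Omega'}$ and thus on $B$, whence $h\delta_\Gamma=-\Delta_\pi w+c\psi=0$, i.e. $h=0$. Suppose $c>0$. Then $\Delta w=c\psi\ge 0$ in $\Omega'$ (recall $\psi\ge 0$), so $w$ is subharmonic there with boundary values $0$; the maximum principle gives $w\le 0$ on $\Omega'$, and combined with $\int_{\Omega'}w=0$ this forces $w\equiv 0$ on $\Omega'$, contradicting $-\Delta w=-c\psi\not\equiv 0$ (here $\psi\not\equiv 0$ since $\widehat\psi_0=1$). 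The case $c<0$ is symmetric, so $c=0$ and injectivity — hence the lemma — follows.

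The routine parts are the regularity statements for $w$ and the identification of $M_D$ as an elliptic, index-zero pseudodifferential operator; both are standard and already flagged in the paper. The step I expect to require the most care is $c=0$: integrating $-\Delta w=-c\psi$ directly over $\Omega'$ produces only a tautology, because the normal-derivative jump of the single layer exactly matches the total mass of the source term, so one is forced to exploit the sign of $\psi$ together with the normalization $\widehat w_0=0$ via the (sub/super)harmonic maximum principle, as above.
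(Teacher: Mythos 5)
Your argument is correct, and for the injectivity step it takes a genuinely different route from the paper. Both proofs begin the same way: decompose $m_D(x,y)=G^D_\pi(x-y)-\tfrac{1}{2\pi}[(-\Delta_\pi)^{-1}\psi](x)$, invoke standard $\Psi$DO/Fredholm theory to reduce to injectivity, set $w=\widetilde S(h)=(-\Delta_\pi)^{-1}P_\psi(h\delta_\Gamma)$, and use uniqueness for the Dirichlet problem to get $w\equiv 0$ on $\overline\Omega$. The divergence is in how one then shows $h\equiv 0$. The paper invokes the classical normal-derivative jump relation for the single-layer potential (so $\partial_{\nu^-}w=h$ because $\partial_{\nu^+}w=0$), integrates $\Delta w$ over $\Omega'=B\setminus\overline\Omega$ to force $\int_\Gamma h\,d\sigma=0$, then uses an energy identity plus Poincar\'e's inequality on $\Omega'$ to conclude $w|_{\Omega'}\equiv 0$, and finally reads $h$ off the normal-derivative jump. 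You instead work entirely with the sign of $\psi$: from $\Delta w=c\psi$ in $\Omega'$, the zero trace on $\Gamma$, and the normalization $\int_B w=\hat w_0=0$, the sub/superharmonic maximum principle forces $c=0$, after which $w$ is harmonic in $\Omega'$ with zero trace and hence vanishes, giving $h\delta_\Gamma=0$. This is shorter and avoids the jump relations and the Poincar\'e step entirely, at the cost of relying on the hypothesis $\psi\ge 0$ (which \eqref{meanzeroadj} does assume); the paper's flux-based route does not use the sign of $\psi$ at all.

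One small inaccuracy in your closing remark: integrating $\Delta w=c\psi$ over $\Omega'$ is \emph{not} tautological once one uses the already-established fact $w|_{\Omega}\equiv 0$. Then $\partial_{\nu^+}w=0$, the jump relation gives $\partial_{\nu^-}w=h$, and the divergence theorem on $\Omega'$ yields $-\int_\Gamma h\,d\sigma = c\int_B\psi = \int_\Gamma h\,d\sigma$, hence $\int_\Gamma h\,d\sigma=0$ with no sign hypothesis on $\psi$. It is only the integral over all of $B$ (or over $\Omega'$ without exploiting $w|_\Omega\equiv 0$) that collapses to a tautology, because on $B$ the total flux vanishes by periodicity. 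So the paper's flux computation and your maximum-principle argument are both viable ways to obtain $c=0$, each with its own extra input.

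Two minor points you may want to tighten if you write this up. First, ``self-adjoint because $G^D_\pi$ is even'' is a statement about the leading term only; the rank-one correction $-\tfrac{1}{2\pi}[(-\Delta_\pi)^{-1}\psi](x)\int_\Gamma h$ is not self-adjoint, and the index-zero conclusion should be drawn from the scalar real principal symbol (or from the self-adjointness of $S_\pi$) rather than from $M_D$ itself. Second, the statement that $w$ is continuous on $B$ and smooth up to $\Gamma$ from either side is standard for layer potentials with smooth densities, but it is worth flagging that $w\in H^{3/2-\varepsilon}_\pi\hookrightarrow C^0_\pi$ only gives continuity, and the one-sided smoothness is what licenses the maximum principle up to the boundary.
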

\begin{proof}
First notice that $G^D_\pi$ is a fundamental solution on the space of mean
zero distributions. It follows either from Poisson's summation formula
or from the theory of pseudodifferential operators \cite{Tay96b} that
$G^D_\pi$ is smooth away from the diagonal $[x=y]$ and that 
$$
 G^D_\pi(x,y)\simeq \frac{1}{2\pi}\log \bigl( |x-y| \bigr)=G(x,y),\:
 x\simeq y\in B,
$$
i.e., it has the same singular behavior of the full space fundamental
solution $G$. It indeed differs from it by a smooth kernel only. 
Now one has that
$$
 m_D(x,y)=\frac{1}{4\pi^2}\sum_{k\in \mathbb{Z}^2}e^{ik\cdot
 x}\frac{1}{|k|^2}\bigl( e^{-ik\cdot y}-\hat\psi _k \bigr)=\frac{1}{4\pi^2}
 \sum_{0\neq k\in\mathbb{Z}^2}e^{ik\cdot x}\frac{1}{|k|^2}\bigl( 
 e^{-ik\cdot y}-\hat\psi _k \bigr)
$$
and thus that
$$
 m_D(x,y)=G^D_\pi(x,y)-\frac{1}{4\pi^2}\sum_{0\neq k\in\mathbb{Z}^2}e^{ik\cdot
   x}\frac{\hat\psi _k}{|k|^2}=G^D_\pi(x,y)-\eta(x),
$$
where $\eta$ is a smooth function. Consequently one sees that
$$
 \widetilde{S}(h)=\int _\Gamma G^D_\pi(\cdot,y)h(y)\, d
 \sigma_\Gamma(y)-\eta(x)\int _\Gamma h(y)\, d
 \sigma_\Gamma(y)=S_\pi(h)-\eta(x)\int _\Gamma h(y)\, d
 \sigma_\Gamma(y).
$$
This means that $\widetilde{S}$ enjoys the same classical jump
relations as $S$ (and $S_\pi$) given by
$$
 S(h)=\int _\Gamma G(\cdot,y)h(y)\, d \sigma_\Gamma(y),
$$
i.e. it holds
\begin{equation}\label{jumprelation}
 \begin{cases}
  \gamma^+_\gamma\widetilde{S}(h)=\gamma^-
  _\gamma\widetilde{S}(h)=\gamma_\Gamma\widetilde{S}(h),&\\
  \partial_{\nu^+_\Gamma}\widetilde{S}(h)-\partial
  _{\nu^-_\Gamma}\widetilde{S}(h)=-h,&
 \end{cases}
\end{equation}
where the superscripts $\pm$ indicate limits taken from within and
from without $\Omega$, respectively, just as in
\eqref{jumprelations}. It also follows (see e.g. \cite{Tay96b}) that $M_D$ is
Fredholm and that it continuously maps 
$\operatorname{H}^s(\Gamma)$ to $\operatorname{H}^{s+1}(\Gamma)$ for
any $s\in \mathbb{R}$. It is therefore enough to show
that $M_D$ is injective ``on smooth functions'', i.e. that 
$$
 \gamma _\Gamma \widetilde{S}(h)=\gamma_\Gamma\int _\Gamma 
 m_D(\cdot,y)h(y)\, d \sigma_\Gamma (y)=0\Longrightarrow h\equiv 0,  
$$
for smooth $h:\Gamma\to \mathbb{R}$. Since $\widetilde{S}(h)$ is
defined for all $x\in B$ and is harmonic in $B \setminus \Gamma$,
unique solvability of the Dirichlet problem in $\Omega$ yields that
$\widetilde{S}(h)\big |_{\Omega}\equiv 0$. It follows that
$$
 \partial_{\nu_\Gamma^+}\widetilde{S}(h)-\partial_{\nu_\Gamma^-}\widetilde{S}(h)
 =-\partial_{\nu_\Gamma^-}\widetilde{S}(h)=-h.
$$
Now, in $B \setminus \Omega $ one has that
$$
 \Delta \widetilde{S}(h)=\bigl(\psi-\hat\psi_0e_0\bigr)\int _\Gamma h(y)\, d
 \sigma _\Gamma(y),
$$
and, consequently, that
$$
 -\int _\Gamma h(y)\, d\sigma _\Gamma(y)=\int
 _\Gamma  \partial_{\nu_\Gamma^-}\widetilde{S}(h)d \sigma 
 _\Gamma(y)= \int _{B \setminus \Omega}\Delta \widetilde{S}(h)=
\hat\psi_0\bigl[ 1-\frac{|B \setminus
\Omega|}{4\pi^2}\bigr]\int _\Gamma h(y)\, d\sigma _\Gamma(y),
$$
since $\operatorname{supp}(\psi)\subset B \setminus \Omega$ and
$\hat\psi _0=1$. This,
in turn, implies that $\int _\Gamma h(y)\, d\sigma _\Gamma(y)=0$ because
$|B \setminus\Omega|<|B|=4\pi^2$. For such a $h$, it therefore
holds that
$$
 \widetilde{S}(h)=S_\pi(h).
$$ 
By construction it holds that
$$
 \int _B {S_\pi}(h)\, dx=0,
$$
so that Poincar\'e's inequality yields
$$
 \int _{B \setminus \Omega}S_\pi(h)^2\, dx=\int _B
 S_\pi(h)^2\, dx\\\leq c\int _B \big
 |\nabla S_\pi(h)\big |^2\, dx= c\int _{B \setminus
   \Omega}\big |\nabla S_\pi(h)\big |^2\, dx,
$$
and entails that, if $S_\pi(h) \big | _{B \setminus \Omega}$ is
constant, then it has to vanish identically. Since
$$
 0=-\int _{B \setminus \Omega}S_\pi(h)\Delta
 S_\pi(h)\, dx=\int _{B \setminus \Omega}\big |
 \nabla S_\pi(h)\big |^2\, dx+\int _\Gamma
 \underset{=0}{\underbrace{S_\pi(h)}}\,\partial _{\nu_\Gamma}
   S_\pi(h)\, d \sigma_\Gamma,
$$
it therefore follows that $S_\pi(h) \big | _{B \setminus
\Omega}\equiv 0$. Finally this shows that
$$
 \partial _{\nu _\Gamma^-}S_\pi(h)\big | _{B \setminus \Omega}=h=0,
$$
thus establishing the claim.
\end{proof}
\begin{prop}\label{invtbility}
The modified operator $\widetilde{M}_D$ is injective provided
$\tilde{y}\simeq y$ and $\varphi _{\tilde{y}}\simeq
\delta_{\tilde{y}}$ for $y\in \Gamma$. 
\end{prop}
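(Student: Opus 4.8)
The plan is to obtain the injectivity of $\widetilde M_D$ as a perturbation statement built on the invertibility of $M_D$ just established. As recorded in the proof of the preceding Lemma, $M_D$ is Fredholm and maps $\operatorname{H}^s(\Gamma)$ boundedly into $\operatorname{H}^{s+1}(\Gamma)$ for every $s\in\mathbb{R}$; being moreover bijective, it is bounded below, so there is $c_s>0$ with $\|M_D h\|_{\operatorname{H}^{s+1}(\Gamma)}\geq c_s\|h\|_{\operatorname{H}^s(\Gamma)}$ for all $h$. It therefore suffices to show that
$$
 \|\widetilde M_D-M_D\|_{\mathcal{L}(\operatorname{H}^s(\Gamma),\operatorname{H}^{s+1}(\Gamma))}<c_s
$$
whenever $\tilde y$ is close enough to $y$ and $\varphi_{\tilde y}$ is sufficiently concentrated around $\tilde y$; for then $\|\widetilde M_D h\|_{\operatorname{H}^{s+1}(\Gamma)}\geq(c_s-\|\widetilde M_D-M_D\|)\|h\|_{\operatorname{H}^s(\Gamma)}>0$ for every $h\neq 0$, which is exactly injectivity. (Since $M_D$, and hence $\widetilde M_D$, is Fredholm of index $0$, openness of the invertibles in $\mathcal{L}(\operatorname{H}^s(\Gamma),\operatorname{H}^{s+1}(\Gamma))$ then even gives that $\widetilde M_D$ is invertible, in line with the discussion preceding the statement.)

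To control the perturbation I would compute the difference of the two kernel functions. Writing $(-\Delta_\pi)^{-1}$ for convolution with $G^D_\pi$ and normalizing $\varphi_{\tilde y}$ so that $\int_B\varphi_{\tilde y}=1$ (as befits an approximation of $\delta_{\tilde y}$; a deviation of this integral from $1$ only adds a smooth perturbation of $m_D$ of the corresponding size), the terms involving $\psi$ in $m_D$ and in $\widetilde m_D$ coincide, so that
$$
 \widetilde m_D(x,y)-m_D(x,y)=\bigl(G^D_\pi*\varphi_{\tilde y}\bigr)(x)-G^D_\pi(x,y)=\int_B\bigl[G^D_\pi(x,z)-G^D_\pi(x,y)\bigr]\varphi_{\tilde y}(z)\, dz;
$$
equivalently $\bigl(\widetilde M_D-M_D\bigr)h=\gamma_\Gamma(-\Delta_\pi)^{-1}\bigl(\mu_h-h\,\delta_\Gamma\bigr)$, where $\mu_h=\int_\Gamma\varphi_{\tilde y}\,h(y)\,d\sigma_\Gamma(y)$ is a smooth density supported in $T^\delta_\Gamma$. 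As recalled in the Lemma, $G^D_\pi$ differs from the full-space fundamental solution $G$ by a smooth kernel, hence is smooth off the diagonal; for $z\in\operatorname{supp}(\varphi_{\tilde y})$ one has $|z-y|\leq\delta+\operatorname{diam}\operatorname{supp}(\varphi_{\tilde y})$, and the integrand is bounded off the diagonal by the mean value theorem and near $x\simeq y$ by the logarithmic modulus of continuity of $G$. Feeding these pointwise bounds into the mapping estimates for the layer potential on the right-hand side — a Schur-type argument along the Sobolev scale of $\Gamma$, using that $(-\Delta_\pi)^{-1}$ gains two derivatives and that $\mu_h\to h\,\delta_\Gamma$ in the relevant negative-order periodic Sobolev space, with a rate governed by the concentration width of $\varphi_{\tilde y}$ and by $\delta$ — yields $\|\widetilde M_D-M_D\|_{\mathcal{L}(\operatorname{H}^s(\Gamma),\operatorname{H}^{s+1}(\Gamma))}\to 0$ as $\tilde y\to y$ and $\varphi_{\tilde y}\to\delta_{\tilde y}$. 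Combined with the lower bound for $M_D$, this proves the proposition.

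The one genuinely delicate point is the near-diagonal contribution: since the shift $\tilde y=y+\delta\nu(y)$ pushes the source a fixed distance $\delta$ off $\Gamma$, the kernel $G^D_\pi(x,\cdot)$ is smooth in a neighbourhood of $\Gamma$, but when $|x-y|$ is comparable to $\delta$ or to the width of $\varphi_{\tilde y}$ the difference $G^D_\pi(x,z)-G^D_\pi(x,y)$ is no longer uniformly small and the logarithm must be tracked carefully. I would organise this by separating the two limits: first let $\varphi_{\tilde y}\to\delta_{\tilde y}$ with $\delta>0$ fixed, where the relevant kernels are uniformly smooth near $\Gamma$ and the passage to the limit is harmless, producing the intermediate operator $M_D^\delta$ with kernel $G^D_\pi(x,\tilde y)-\frac{1}{2\pi}(G^D_\pi*\psi)(x)$; then let $\delta\to 0$, using that $\int_\Gamma|G^D_\pi(x,y+\delta\nu(y))-G^D_\pi(x,y)|\,d\sigma_\Gamma(y)\to 0$ uniformly in $x\in\Gamma$ because the integrable logarithmic singularity converges to itself in $L^1$ under translation. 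Everything else is routine.
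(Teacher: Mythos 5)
Your proposal follows the same skeleton as the paper: compute the kernel difference, pass to the limit in two stages (first $\varphi_{\tilde y}\to\delta_{\tilde y}$ with $\delta>0$ fixed, then $\tilde y\to y$ via dominated convergence using the integrable logarithmic singularity), and then deduce injectivity of $\widetilde M_D$ as a perturbation of the invertible $M_D$. Where you depart is in the topology in which the perturbation is supposed to be small, and this is where the argument breaks. You ask for $\|\widetilde M_D-M_D\|_{\mathcal L(\operatorname{H}^s(\Gamma),\operatorname{H}^{s+1}(\Gamma))}\to 0$ so as to invoke the lower bound for $M_D$ on that scale, and you dismiss this as routine; it is not, and in fact it is false. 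The operator $\widetilde M_D$ is infinitely smoothing, with a kernel whose tangential Fourier content is cut off at a scale dictated by the concentration width of $\varphi_{\tilde y}$ and by $\delta$. Feeding in a high-frequency boundary density $h\sim e^{in\theta}$ with $n\gg 1/\delta$, the surface distribution $h\,\delta_\Gamma$ has essentially all of its $B$-frequency content above $n$, the mollifier $\varphi_{\tilde y}$ annihilates it, and $\widetilde M_D h\simeq 0$; meanwhile $M_D h$ has the expected size $\sim\|h\|_{\operatorname{H}^s}$. The ratio $\|(\widetilde M_D-M_D)h\|_{\operatorname{H}^{s+1}}/\|h\|_{\operatorname{H}^s}$ therefore does not become small as $\tilde\Gamma\to\Gamma$: it is bounded below along high-frequency inputs. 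Equivalently, the step ``$\mu_h\to h\,\delta_\Gamma$ in the relevant negative-order Sobolev space with a rate'' is only strong convergence; translation and mollification are not norm-continuous, so there is no rate uniform over $\|h\|_{\operatorname{H}^s(\Gamma)}\leq 1$, and the Schur-type estimate you invoke does not close. (The same phenomenon is also why the paper flags the dramatic growth of $\operatorname{cond}(M_\varphi)$: the discretized operator is far from $M_\delta$ in the scale-respecting norm.)

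The paper instead proves only that the kernel difference is small in $\operatorname{L}^1(\Gamma,d\sigma_\Gamma(y))$ uniformly in $x$ (dominated convergence exploiting the square-integrable logarithmic singularity), which controls $\|\widetilde M_D-M_D\|_{\mathcal L(\operatorname{L}^2(\Gamma))}$ but deliberately does not claim control in $\mathcal L(\operatorname{H}^s,\operatorname{H}^{s+1})$. The concluding step in the paper is also terse, but it does not rest on the false estimate you propose; your version replaces that terseness with a concrete assertion that cannot hold, so the gap is genuine. If you want to keep the Fredholm-scale framing, you need a different mechanism for injectivity at small but fixed $\delta$ — for instance a coercivity or positivity estimate preserved under smoothing, or an argument tailored to the specific family $\{\widetilde M_D\}$ rather than a generic operator-norm perturbation — rather than norm convergence of $\widetilde M_D$ to $M_D$ on the natural mapping scale.
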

\begin{proof}
The operator $\widetilde{M}^D$ has smooth kernel and is therefore
compact. Given any smooth $h\not\equiv 0$, it follows from the
previous lemma that $\gamma_\Gamma\widetilde{S}(h)\not\equiv 0$. Now
it holds that
$$
 \langle \delta _x,(-\Delta _\pi)^{-1}P_\psi{\varphi}
 _{\tilde{y}}\rangle\to\langle  \delta _x,(-\Delta
 _\pi)^{-1}P_\psi{\delta}
 _{\tilde{y}}\rangle\text{ as }\varphi_{\tilde y}\to {\delta} _{\tilde y},
$$
pointwise everywhere in $x,y\in \Gamma$ (in fact, uniformly). On the
other hand, one also has that
$$
 \delta_{\tilde{y}}\to \delta_y\text{ as }\tilde{y}\to y,
$$
uniformly in $y\in \Gamma$ in the sense of distributions (or in the
sense of measures) so that
$$
  \langle \delta _x,(-\Delta _\pi)^{-1}P_\psi\delta _{\tilde{y}}\rangle\to
 \langle  \delta _x,(-\Delta _\pi)^{-1}P_\psi\delta
 _y\rangle\text{ as }\tilde y\to y,
$$
pointwise for $x\neq y$, i.e., almost everywhere. Since the limiting
kernel is integrable in view of its logarithmic behavior in the
singularity and provides a bound for the approximating kernels,
Lebesgue's theorem yields that
$$
 \langle \delta _x,(-\Delta _\pi)^{-1}P_\psi\delta _{\tilde{y}}\rangle\to
 m_D(x,y)\text{ in }\operatorname{L}^1 \bigl(\Gamma,d
 \sigma_\Gamma(y)\bigr),
$$
uniformly in $x\in \Gamma$, and, in fact, uniformly in
$|x-y|\geq\varepsilon$ for any $\varepsilon>0$. Consequently
$$
 \widetilde{M}_D(h)\to M_D(h)\text{ as }\tilde{\Gamma}\to \Gamma,
$$
uniformly in $\bigl[\| h\| _2=1\bigr]$ due to the mild (in particular
square integrable) singularity of $m_D$ on the diagonal. This then
entails that  
$$
 \operatorname{ker}(\widetilde{M}_D)=\{ 0\},
$$
for $\tilde{\Gamma}$ close enough to $\Gamma$.
\end{proof}
This useful property will remain valid after discretization, which is
just an additional approximation, even if, as will be demostrated in 
the numerical examples, the modified and the original boundary are 
not that close to each other.
\begin{rem}
The result shows that the functions $w_{\tilde y}:\Omega\to
\mathbb{R}$ given by
$$
 w_{\tilde y}(x)=\int _B G^D_\pi(x,z)P_\psi\varphi _{\tilde y}(z)\, dz\text{
   for } y\in \Gamma 
$$
are ``linearly independent'' elements of $\operatorname{ker}(\Delta
_\Omega)$ if $\widetilde{\Gamma}\simeq \Gamma$ and $ \varphi _{\tilde y}\simeq
\delta_{\tilde y}$. This is intuitively clear for $\widetilde{\Gamma}=\Gamma$
and $\varphi _{\tilde y}\simeq\delta_y$ since, then, $w_y$ are
functions with singularities at different locations $x=y$, yielding a
``diagonally dominant'' kernel (or matrix, at the discrete level).
\end{rem}
\begin{rem}
When dealing with the Neumann problem in the classical way, the fact
that the normal derivative of $S$ is not continuous across $\Gamma$ as
clearly indicated by \eqref{jumprelations}, does require care in
obtaining the correct numerical formulation. By using the kernel
generation procedure described in this paper, however, the problem is
completely avoided, since the relevant kernel $\widetilde{m}_N$ is
smooth thanks to the replacement of $\delta_y$ by $\varphi _{\tilde
  y}$ in its construction.
\end{rem}
\begin{rem}
Notice that the proposed kernel construction effectively replaces a
pseudo-differential operator of type $-1$ or type $0$ for $b=D$ or
$b=N$, respectively, with an infinitely smoothing
operator. Incidentally, an operator of type $k$ is a bounded linear
operator which maps, in the above context,
$\operatorname{L}^2(\Gamma)$ to $\operatorname{H}^{-k}(\Gamma)$. This
has important consequences. One is that the approximating operator is
compact with unbounded inverse (more so that the approximated
operator), i.e. it does not enjoy the same ``functional'' mapping
properties. At the numerical level this will be reflected in a
significant increase in the condition number of the discretized
operator. It will, however, be possible to use natural ``rougher''
discretizations of the same operator as preconditioners, thus
completely curing the conditioning issues, while maintaining the
highly desirable fast converging numerical discretizations to the
approximate smooth kernel and, consequently, accuracy.
\end{rem}
\section{Numerical Implementation and experiments}
The periodc box $B=[-\pi,\pi]^2$ is discretized by a uniform grid
$G^m$ of $m^2$ points by discretizing each direction by
$$
 z^m_j=-\pi+2\pi\frac{j}{m},\: j=0,\dots,m-1,
$$
where $z=x_1,x_2$. The boundary value problems will be posed on the
unit circle centered at the origin, i.e. $\Omega=\mathbb{B}(0,1)$ and the 
padding function $\psi$ of \eqref{meanzeroadj} is defined by
$$
 \psi(x_1,x_2)=e^{-200\sin^2[\frac{1}{2}(x_1-\pi)]\sin^2[\frac{1}{2}(x_2-\pi)]}.
$$
While it is not analytically compactly supported away from $\Omega$,
it numerically vanishes outside a neighborhood of the boundary of the
periodicity box $B$ as show in the contourplot below. The boundary of the
domain $\Omega$ is discretized by $n$ equidistant points
$$
 y_j=\bigl(\cos(\theta _j),\sin(\theta _j)\bigr),\: j=0,...,n-1,
$$
where $\theta _j=2\pi \frac{j}{n}$, yielding the set
$\Gamma^n$. Wherever required, the analytical 
knowledge of the boundary $\Gamma$ of $\Omega$ will be used to
obtain numerical quantities such as, e.g., normal and tangent
vectors. In some applications these might need to be replaced by their
numerical counterparts or done away with altogether by choosing as
centers for the required test-functions points on the grid which are
roughly located along the (numerical) outward normal.
\begin{center}
\includegraphics[scale=0.35]{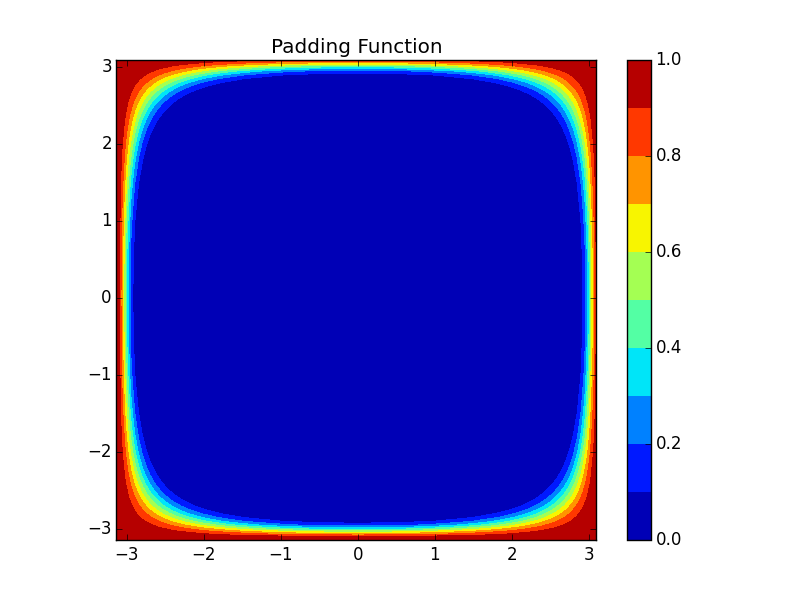}
\includegraphics[scale=0.35]{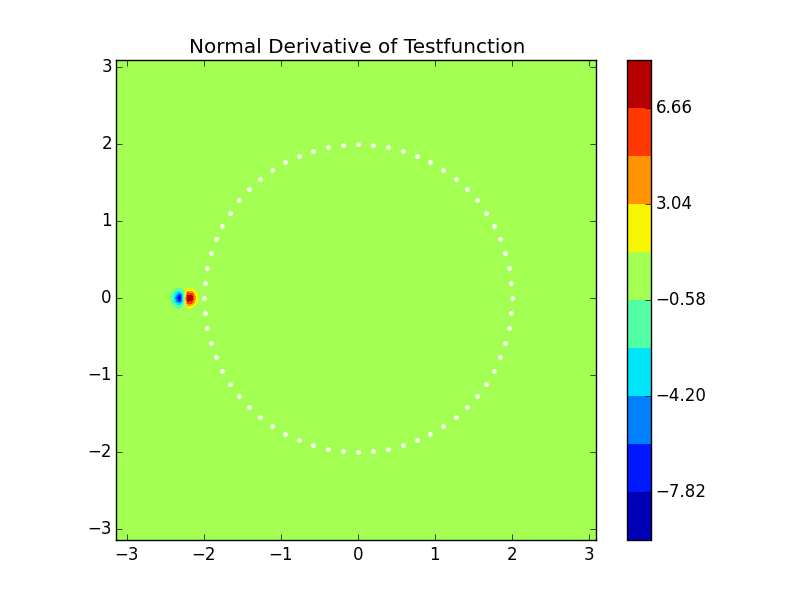}
\end{center}
At the chosen discretization level $m$, the discrete Laplace
operator $-\triangle ^m$ on the periodicity box is represented spectrally
via discrete Fast Fourier transform $\mathcal{F}_m$ via
$$
 \mathcal{F} _m^{-1}\operatorname{diag}\bigl[ (|k|^2)_{k\in
   \mathbb{Z}^2_{m}}\bigr]\mathcal{F} _m.
$$
The projection $P_\psi$ of \eqref{adjust} is discretized by
$$
 P^m_\psi(u^m)=u^m-\frac{\mathcal{F} _m(u^m)(0,0)}{\mathcal{F} _m(\psi
     ^m)(0,0)}\psi ^m,
$$
where $u^m$ is a grid vector, i.e. a function defined on the grid $G^m$
and $\psi ^m$ is the evaluation of $\psi$ on it. The testfunctions
$\varphi _{\tilde y}$ supported about the point $\tilde y\in B
\setminus \Omega$ used in the set up of the kernel are chosen of two
different types: symmetric and non-symmetric. The former are defined
through
$$
 \varphi _{\tilde y}(z)=e^{-\alpha\sin^2[\frac{1}{2}(z_1-\tilde
     y_1)]\sin^2[\frac{1}{2}(z_2-\tilde y_2)]}, \: z\in B
$$
and are discretized by evaluation on the grid $G^m$ and setting
$\alpha = 4m$ in order to make the testfunction ``sharper''
compatibly with the resolution power of the grid. For reasons to be
explained later, non-symmetric and ``sharper'' testfunctions are
useful. Given a point $y\in \Gamma=\mathbb{S}^1$, let $\tau=\tau(y)$ and
$\nu=\nu(y)$ denote the corresponding unit tangent and normal vector, 
respectively. Then consider
\begin{equation}\label{dtfct}
 \partial _\nu \varphi _{\tilde y},
\end{equation}
where the reader is reminded that
$$
 \tilde y=y+\delta \nu(y),\: y\in \Gamma.
$$
This type of testfunction, depicted in the contourplot above, has the
added adavantage of automatically having vanishing average, and plays
an important role in deriving efficient numerical discretizations (see
Subsection \ref{constcoeff}). 
%\begin{center}
%\includegraphics[scale=0.35]{otfct2d.png}\hspace{2pt}
%\includegraphics[scale=0.35]{ndtfct2d.png}
%\end{center}
\subsection{Bulk Integrals}
As a first example consider the domain integral as described in
Section \ref{integral}. Letting $\Omega=\mathbb{B}(0,2)$ and computing
the Fourier coefficients $\tilde I_k$ of the distribution $I=\chi _\Omega$ just as
explained in \eqref{intcoeff1}-\eqref{intcoeff2} by using the trapezoidal rule for the angular
parametrization of $\mathbb{S}^2_2$, one obtains a quadrature rule for
integration over $\Omega$. Table \ref{Table:integral} summarizes the
results obtained when applying the quadrature to the function
$$
 u=\cos(\frac{\pi}{4} r^2),\: r=|x|>0.
$$
\begin{center}
\begin{table}
\caption{Relative error for the Fourier quadrature rule at different
  discretization levels.}
\label{Table:integral}
\begin{tabular}{|ccc|ccc|}
\hline
$m$&$n$&$e^{m,n}$&$m$&$n$&$e^{m,n}$\\\hline
32&128& 3.78e-03&128& 128& 4.30e-08\\
& 256& 3.78e-03&& 256& 4.33e-08\\
& 512& 3.78e-03&& 512& 4.33e-08\\
64& 128& 1.80e-04&192& 128& 3.07e-07\\
& 256& 1.80e-04&& 256& 2.58e-07\\
& 512& 1.80e-04&& 512& 2.58e-07\\
96& 128& 8.73e-06&256& 128& 1.81e-08\\
& 256& 8.73e-06&& 256& 4.05e-08\\
& 512& 8.73e-06&& 512& 4.05e-08\\\hline
\end{tabular}
\end{table}
\end{center}
It appears that the number of discretization points $n$ has less of an
impact on the accuracy than the bulk discretization level $m$ as
can be expected since the integrand is radially symmetric. 
\subsection{Dirichlet Problem}
Consider now the homogeneous Dirichlet Problem on $\mathbb{B}(0,1)$ and take the
right hand side to be $f\equiv 1$ defined on whole square $B$. In a
first step, a grid vector $v^m$ is determined satisfying 
$$
 -\triangle ^m v^m\equiv 1.
$$
This can be done simply by taking
$$
 v^m=\mathcal{F}^{-1}_m\operatorname{diag}\bigl[ (g^m_\pi(k))_{k\in
   \mathbb{Z}^2_{m}}\bigr]\mathcal{F}_m\bigl(
 P^m_\psi(1^m)\bigr)=:G^m_\pi(1^m), 
$$
where $1^m$ is the constant grid function with value $1$ and
$$
 g^m_\pi(k)=\begin{cases} 0,&\text{if }k=(0,0),\\ |k|^{-2},&\text{if
   }k\in \mathbb{Z}^2_m \setminus\{(0,0)\}.
 \end{cases}
$$
Next the boundary weight vector $w^n$ is determined such that
$$
 \big\langle \delta ^m_{y_j}, v^m+\sum _{k=1}^nw^n_kG^m_\pi(\varphi
 ^m_{\tilde y_k})\big\rangle _{q^m}=0\text{ for }j=1,\dots,n.
$$
This leads to a system of equations for the entries of $w^n$
characterized by the matrix $M$ with entries
$$
 M_{jk}=\big\langle \delta ^m_{y_j},G^m_\pi(\varphi ^m_{\tilde
   y_k})\big\rangle _{q^m},\: j,k=1,\dots,n,
$$
following the blueprint laid out in the previous section. It can be
viewed as being close to the spectral discretization
$$
 k^m(x,y)=\langle \delta ^m_x,(-\triangle ^m)^{-1}P^m_\psi(\delta
 ^m_{\tilde y})\rangle _{q^m},\: x,y\in \Gamma^n\subset \mathbb{S}^1.
$$
of the smooth kernel
$$
 k(x,y)=\langle \delta _x,(-\triangle)^{-1}P_\psi(\delta _{\tilde
   y})\rangle,\: x,y\in \mathbb{S}^1.
$$
As mentioned earlier this discretization $k^m$ is actually independent
of the grid $G^m$ and can be evaluated anywhere in $B\times B$, in
particular on $\Gamma ^n\times \Gamma^n$. It follows from Proposition
\ref{invtbility} that $M$ is invertible for appropriate choices of
$\tilde y$ for $y\in \Gamma^n$ and of testfunctions $\varphi _{\tilde
  y}$. Once the grid vector $w^m$ is found, a numerical solution of
the Dirichlet problem is given by
$$
 r^m_\Omega u^{m,n} =r^m_\Omega \bigl(v^m+\sum _{k=1}^nw^n_kG^m_\pi(\varphi
 ^m_{\tilde y_k})\bigr),
$$
where $r^m_\Omega$ denotes the restriction (of functions defined on
$B$ or of vectors defined on the grid $G^m$) to $G^m\cap \Omega
$. The numerical results presented
in Table \ref{Table:Dirichlet} provide information about the relative
$l_2$ and $l_\infty$ 
errors $e^{m,n}_2$ and $e^{m,n}_\infty$ computed as follows
$$
 e^{m,n}_p=\frac{\| r^m_\Omega u^{m,n}-r^m_\Omega u\|
   _{l_p}}{\|r^m_\Omega u\| _{l_p}}\text{ for }p=2,\infty,
$$
This is done for various combined discretization 
levels $(m,n)$, various distances of $\tilde y$ from $y\in \Gamma^n$,
and types of testfunctions in Tables
\ref{Table:Dirichlet}--\ref{Table:Diracs}. Recorded is also the
condition number of 
the obtained matrix $M$. The results with fixed distance $\delta=0.4$
are summarized in Table \ref{Table:Dirichlet}.
\begin{table}[tp]
\caption{Numerical Results for the Dirichlet Problem, $\delta=0.4$}
\label{Table:Dirichlet}
\begin{tabular}{|ccccc|ccccc|}
\hline
$m$&$n$&$e^{m,n}_\infty$&$e^{m,n}_2$&$\operatorname{cond}(M)$&
$m$&$n$&$e^{m,n}_\infty$&$e^{m,n}_2$&$\operatorname{cond}(M)$\\\hline
64&64&2.12e-05&3.18e-05&2.6e+03 &512& 128& 2.17e-11& 1.71e-11& 1.7e+06\\
    &80&1.33e-05&2.90e-05&1.7e+04&& 144& 1.61e-11& 4.83e-12& 8.3e+06\\
    &96&1.10e-05&2.75e-05&1.1e+05&& 160& 3.01e-11& 7.16e-12& 3.9e+07\\
    &112&6.11e-05&1.42e-04&1.1e+08 &1024& 64& 1.30e-06& 1.10e-06& 2.5e+03\\ 
128&64&1.03e-06&1.01e-06&2.5e+03 & &80& 5.44e-08& 4.74e-08& 1.3e+04\\
& 80& 1.94e-07& 1.62e-07& 1.3e+04& &96& 2.43e-09& 2.14e-09& 6.9e+04\\
& 96& 2.07e-07& 9.92e-08& 7.0e+04 & &112& 1.14e-10& 9.89e-11& 3.5e+05\\
256& 64& 1.26e-06& 1.10e-06& 2.5e+03 & &128& 5.48e-12& 4.68e-12& 1.7e+06\\
& 80& 5.43e-08& 4.74e-08& 1.3e+04&&144& 2.75e-13& 2.24e-13& 8.3e+06\\
& 96& 2.32e-09& 2.13e-09& 6.9e+04& &160& 2.38e-14& 6.39e-15& 3.9e+07\\
& 112& 1.17e-10& 1.01e-10& 3.5e+05 && 176& 2.44e-14& 6.30e-15& 1.8e+08\\
\hline
\end{tabular}
\end{table}
It appears clearly that accuracy tends to grow for a given grid
parameter $m$ with increasing number of boundary discretization points
$n$. This happens until the boundary discretization becomes too fine
compared to the given, fixed discretization of the periodicity
box. Notice that, if the parameter $n$ is kept 
fixed, the accuracy improves also as a function of the discretization
size $m$. Similarly gains stop accruing when the box discretization
becomes too fine compared to the fixed boundary resolution. As the
operator approximated by $M$ is of negative order $1$, the condition
number of $M$ is expected to grow linearly in the discretization
size. Indeed increasing $n$ enlarges the condition number. This effect
is, however, compounded by the matrix $M$ becoming less 
and less diagonally dominant as the boundary discretization points
become denser while the support of the testfunctions remains unchanged
for fixed discretization level $m$. Notice that, for fixed $n$, the
condition number of $M$ remains virtually unchanged as $m$
changes. The ``optimal'' value (for the specific choice of
testfunction type and support size) was chosen based on the results
found in Table \ref{Table:Distance} where the arbitrary but still
representative choice of $m=256$ is made and a variety of discretization
levels $n$ are shown. The distance is steadily increased until it no
longer leads to an improvement in the approximation quality. It can be
seen that the accuracy improves with distance and that optimal
distance decreases as the box discretization gets finer, thus allowing
for a stronger resolution power and, consequently, a better
approximation of the testfunctions. There appears to be a trade-off
between condition number of $M$ and accuracy of the outcome, where the
best accuracy is obtained at the cost of a high condition number.
\begin{table}[tp]
\caption{Dependence on
  $\delta=\operatorname{dist}(\Gamma,\widetilde{\Gamma})$ for $m=8$}
\label{Table:Distance}
\begin{tabular}{|ccccc|ccccc|}
\hline
$n$&$\delta$&$e^{m,n}_\infty$&$e^{m,n}_2$&$\operatorname{cond}(M)$&
$n$&$\delta$&$e^{m,n}_\infty$&$e^{m,n}_2$&$\operatorname{cond}(M)$\\
\hline
64& 0.15 & 9.06e-04& 8.69e-04& 9.6e+01 & 80& 0.5&2.59e-09& 2.09e-09& 6.2e+04\\
& 0.2 & 2.28e-04& 2.18e-04& 1.9e+02&& 0.6 &1.54e-10& 1.05e-10& 2.7e+05\\
& 0.3 & 1.58e-05& 1.45e-05& 7.1e+02&& 0.7 &3.84e-11& 1.12e-11& 1.2e+06\\
& 0.4 & 1.26e-06& 1.10e-06& 2.5e+03&& 0.8 &2.95e-11& 1.06e-11& 4.9e+06\\
& 0.5& 1.15e-07& 9.33e-08& 8.3e+03 &144&0.15&6.10e-06& 1.02e-05& 3.9e+03\\
& 0.6& 1.20e-08& 8.90e-09& 2.7e+04 &&0.2& 4.13e-08& 3.81e-08& 1.9e+04\\
& 0.7 & 1.42e-09& 9.41e-10& 8.6e+04&& 0.3& 1.01e-10& 1.02e-10& 4.3e+05\\
& 0.8 & 1.80e-10& 9.65e-11& 2.6e+05&& 0.4& 1.61e-11& 4.83e-12& 8.3e+06\\
& 0.9 & 6.59e-11& 2.09e-11& 7.9e+05&192&0.15& 2.85e-08& 1.40e-08& 2.9e+04\\
80& 0.15 &2.42e-04& 2.06e-04& 2.1e+02&& 0.2& 3.95e-10& 3.63e-10&2.5e+05\\
& 0.2& 4.24e-05& 3.77e-05& 5.1e+02&& 0.3& 1.04e-13& 7.99e-14& 1.6e+07\\
& 0.3& 1.40e-06& 1.24e-06& 2.7e+03&& 0.35& 1.94e-14& 2.96e-15& 1.3e+08\\
& 0.4& 5.43e-08& 4.74e-08& 1.3e+04 &&&&&\\\hline
\end{tabular}
\end{table}
In perfect agreement with the theoretical analysis, the condition
number of $M$ is the least when using Dirac delta functions located
along the discrete boundary $\Gamma^n$ in the numerical representation
of the kernel. This is clearly evident in the data shown in Table
\ref{Table:Diracs} for two choices of discretization level, $m=128,256$.
\begin{table}[tp]
\caption{Kernel based on Dirac delta functions supported along
  $\Gamma$.}
\label{Table:Diracs}
\begin{tabular}{|ccccc|ccccc|}
\hline
$m$&$n$&$\operatorname{cond}(M)$&$e^{m,n}_\infty$&$e^{m,n}_2$&
$m$&$n$&$\operatorname{cond}(M)$&$e^{m,n}_\infty$&$e^{m,n}_2$\\
\hline
128& 64& 10.9& 5.49e-02& 4.93e-02&256& 96& 14.34& 4.43e-02& 4.67e-02\\
& 80& 15.05& 4.06e-02& 2.72e-02&& 112& 18.06& 3.46e-02& 3.46e-02\\
& 96& 21.16& 2.59e-02& 1.57e-02&& 124& 21.49& 2.74e-02& 2.51e-02\\
& 112& 26.01& 1.25e-02& 7.50e-03&& 144& 25.58& 2.32e-02& 1.81e-02\\
& 128& 31.88& 7.06e-03& 2.31e-03&& 160& 29.83& 2.17e-02& 1.36e-02\\
& 144& 38.78& 1.05e-02& 7.72e-03&& 176& 36.52& 1.58e-02& 1.03e-02\\
256& 64& 8.42& 8.27e-02& 9.57e-02&& 192& 42.51& 1.20e-02& 7.80e-03\\
& 80& 11.26& 6.09e-02& 6.61e-02&& 208& 47.22& 1.13e-02& 5.66e-03\\\hline
\end{tabular}
\end{table}
Again the low condition number comes at the price of a
reduced accuracy (if the comparison is carried out at the same
discretization level $m$).
\subsubsection{Preconditioning}
Given the dramatic increase in condition number resulting from the use
of the proposed smoother kernels, it is natural to ask whether it can
be mitigated by some preconditioning procedure. Denote by
$M_\varphi$ and $M_\delta$ the matrix obtained discretizing the smooth
kernel  and the singular kernel, respectively, i.e.
$$
 M_\varphi=\big\langle \delta^m_{y_j},G^m_\pi\bigl(\varphi ^m_{\tilde
   y_k}\bigr)\big\rangle_{q^m} ,\: j,k=1,\dots,n, 
$$
and
$$
 M_\delta=\big\langle \delta^m_{y_j},G^m_\pi\bigl(\delta ^m_{y_k}
 \bigr)\big\rangle_{q^m} ,\: j,k=1,\dots,n.
$$
It seems natural to use the better conditioned but ``rough''
approximation $M_\delta$ as a preconditioner for the highly
accurate but badly conditioned $M_\varphi$. In Table
\ref{Table:Precon} the condition numbers of $M_\varphi$, $M_\delta$,
and $C=M_\delta^{-1}M_\varphi$ are shown for a few discretization
levels. They clearly point to an enormous benefit of preconditing. The
plots in Figure \ref{Figure:ContourM} gives a more visual
characterization of the effect of preconditioning on the diagonal
dominance of the corresponding matrix.
\begin{center}
\begin{figure}[!htb]
 \includegraphics[scale=0.3]{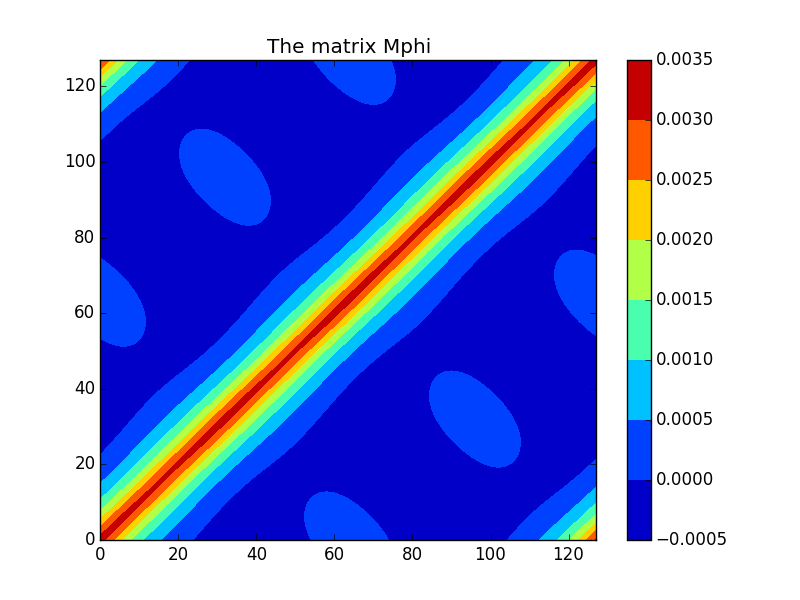}
\includegraphics[scale=0.3]{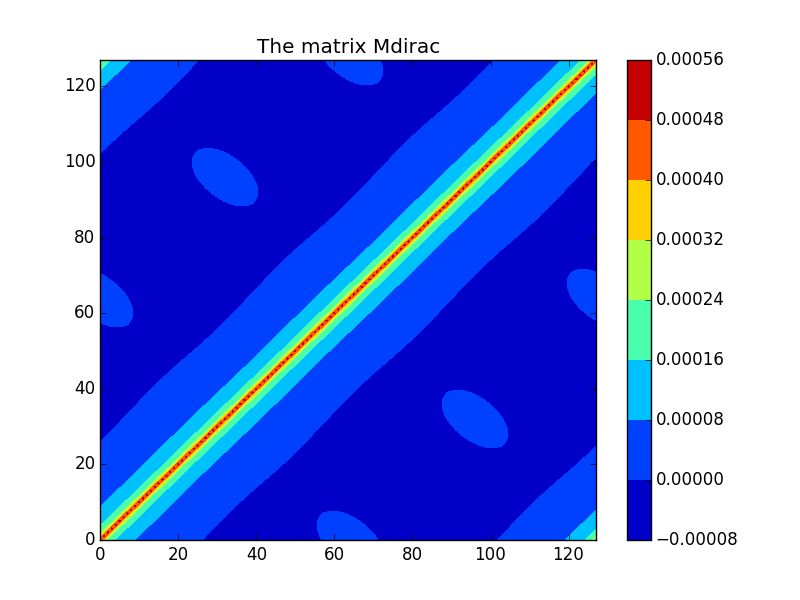}
\includegraphics[scale=0.3]{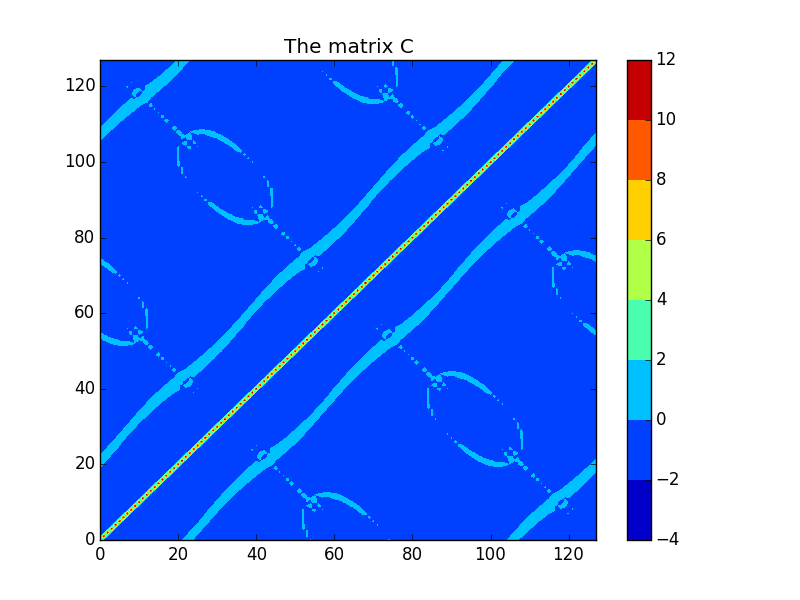}
 \caption{\label{Figure:ContourM} Contour plot of $M_\varphi$,
   $M_\delta$, and $C=M_\delta ^{-1}M_\varphi$ for $m=256$, $n=128$, and
   $\delta=0.4$.}
\end{figure}
\end{center}
\begin{table}[tp]
\caption{Preconditioning effect of $M_\delta^{-1}$ on $M_\varphi$ when
  $\delta=0.4$.} 
\label{Table:Precon}
\begin{tabular}{|cccc|}
\hline
 $(m,n)$&$\operatorname{cond}(M_\varphi)$&
 $\operatorname{cond}(M_\delta)$&
 $\operatorname{cond}(M_\delta^{-1}M_\varphi)$\\
\hline
 $(128,64)$&2.51e+03&1.09e+01&5.24e+00\\
 $(256,128)$&1.72e+06&2.15e+01&1.01e+01\\
$(512,256)$&4.02e+11&4.27e+01&1.96e+01\\
\hline
\end{tabular}
\end{table}
It can therefore be concluded that smoother kernels lead to higher
order resolutions and more accurate numerical results at the cost of
an apparent increase in condition number. Latter can, however, be completely
avoided by a simple and natural preconditioning procedure.
\subsubsection{Effective Numerical Implementation}\label{constcoeff}
The necessity to project a datum onto the subspace of mean zero
functions in the above procedure effectively destroys the translation
invariance of the constant coefficients equation on the periodic
box. This makes it necessary to compute a box solution for each entry
of the matrix $M$. While it was chosen to illustrate the ideas using
test-functions $\varphi _{\tilde{y}}$ approximating Dirac
distributions $\delta _y$ in order to harvest the benefits of the
theoretical analysis ensuring injectivity (and thus invertibility) of
$M$, it is clear that other 
choices are possible, such as normal derivatives of
testfunctions. These are particularly suited since 
they are mean zero functions supported in a small neighborhood of
their ``center-point''. As such they do not require to be projected
onto the mean free subspace. It is therefore enough to compute
$$
 (-\Delta _\pi)^{-1}\partial _{\nu(y)}\varphi _{\tilde y}=\sum
 _{j=1}^2\nu_j(y) (-\Delta _\pi)^{-1}\partial _j\varphi
 _{\tilde y} 
$$ 
for one point $y\in \Gamma$ only since
$$
 (-\Delta _\pi)^{-1}\partial _j\varphi _{\tilde y+v} =(-\Delta
 _\pi)^{-1}\tau _v (\partial _j\varphi _{\tilde y})=\tau _v (-\Delta
 _\pi)^{-1}\partial _j\varphi _{\tilde y},\: j=1,2,
$$
where $\tau _v u=u(\cdot -v)$ is the translation of a periodic
function $u$. This also gives insight into the ``circulant'' structure
of the matrix $M$.

It is also possible to replace the test-function centers $\{\tilde y\, :\, y\in
\Gamma\}$ by nearby or closest (box) grid points in $G^m$ so that the
translations required to obtain the kernel from the knowledge of, say, 
$(-\Delta _\pi)^{-1}\partial _{\nu(y_1)}\varphi _{\tilde y_1}$, can be
implemented efficiently (i.e. in physical space).
\begin{rem}
Notice that, if $\Delta$ is replaced by a more general
elliptic non-constant coefficient differential operator, the kernel
construction given above is still viable and would deliver a purely
numerical boundary integral method which does not rely on 
the explicit analytical knowledge of a fundamental solution for the
differential operator. It even allows replacing the ``discrete''
fundamental solution by a smooth kernel which can more accurately be
captured numerically. Remarkably this can be done at
effectively not cost due to the availability of the natural
preconditioning procedure  described above.
\end{rem}
\begin{rem}
The proposed construction of smooth kernels also suggests that
iterative parallelized methods can be used in the computation of the
entries of the matrix $M$ with a small number of iterations in the
case of a non-constant coefficient differential operator
$\mathcal{A}$, at least when the coefficients vary smoothly. This is
due to the fact that the building blocks $\mathcal{A_\pi}^{-1}\varphi
_{\tilde y}$ will be locally close to each other thus providing
excellent initial guesses for an iterative solver.
\end{rem}
\subsubsection{Kernel Functions}
Ultimately the accuracy of the method rests on its ability to
faithfully compute linearly independent functions in the kernel of 
the Laplacian $\Delta ^D_\Omega$ on the domain $\Omega$. These are
known explicitly for $\Omega=\mathbb{B}(0,2)$ and given by 
$$
 \psi_k(r,\theta)=(\frac{r}{2})^ke^{ik\theta},\: r\in[0,2],\:
 \theta\in[0,2\pi),\: k\in \mathbb{N},
$$
in polar coordinates. Using the method described above, it is possible
to compute a numerical approximation of these functions defined on
$G^m$. Tables \ref{Table:kfunctions1} and \ref{Table:kfunctions2} give
the relative errors observed for the first 33 kernel functions at two
distinct discretization levels.
\begin{center}
\begin{table}[!htb]
\caption{Resolution of the first 33 kernel functions for $m=128$,
  $n=80$, and $\delta=0.4$.}
\label{Table:kfunctions1}
\begin{tabular}{|c|cc||c|cc||c|cc|}
\hline
&$\ell _\infty$-err&$\ell _2$-err&&$\ell_\infty$-err&$\ell _2$-err  
&&$\ell _\infty$-err&$\ell _2$-err\\\hline
$\psi _{1}$&1.84e-07&8.11e-08&$\psi _{12}$&1.08e-05&9.60e-06&$\psi _{23}$&1.43e-03&9.88e-04\\
$\psi _{2}$&1.72e-07&9.16e-08&$\psi _{13}$&2.14e-05&1.39e-05&$\psi _{24}$&2.05e-03&1.57e-03\\
$\psi _{3}$&3.45e-07&2.08e-07&$\psi _{14}$&3.65e-05&2.28e-05&$\psi _{25}$&3.68e-03&2.36e-03\\
$\psi _{4}$&5.99e-07&3.60e-07&$\psi _{15}$&5.80e-05&3.13e-05&$\psi _{26}$&4.40e-03&3.51e-03\\
$\psi _{5}$&1.21e-06&5.73e-07&$\psi _{16}$&7.81e-05&5.81e-05&$\psi _{27}$&6.87e-03&5.60e-03\\
$\psi _{6}$&1.23e-06&8.65e-07&$\psi _{17}$&1.10e-04&7.28e-05&$\psi _{28}$&1.17e-02&9.09e-03\\
$\psi _{7}$&2.53e-06&1.32e-06&$\psi _{18}$&1.60e-04&1.04e-04&$\psi _{29}$&1.97e-02&1.32e-02\\
$\psi _{8}$&4.29e-06&2.66e-06&$\psi _{19}$&2.99e-04&1.72e-04&$\psi _{30}$&2.76e-02&2.14e-02\\
$\psi _{9}$&5.50e-06&2.92e-06&$\psi _{20}$&3.10e-04&2.63e-04&$\psi _{31}$&3.84e-02&3.08e-02\\
$\psi _{10}$&5.59e-06&4.38e-06&$\psi _{21}$&6.44e-04&4.12e-04&$\psi _{32}$&4.99e-02&4.63e-02\\
$\psi _{11}$&1.26e-05&6.32e-06&$\psi _{22}$&1.07e-03&6.35e-04&$\psi
                                                               _{33}$&9.99e-02&7.05e-02\\\hline
\end{tabular}
\end{table}
\end{center}
\begin{center}
\begin{table}[!htb]
\caption{Resolution of the first 33 kernel functions for $m=512$,
  $n=256$, and $\delta=0.4$.}
\label{Table:kfunctions2}
\begin{tabular}{|c|cc||c|cc||c|cc|}
\hline
&$\ell _\infty$-err&$\ell _2$-err&&$\ell_\infty$-err&$\ell _2$-err  
&&$\ell _\infty$-err&$\ell _2$-err\\\hline
$\psi _{1}$&1.24e-13&1.09e-14&$\psi _{12}$&1.57e-13&5.68e-14&$\psi _{23}$&6.23e-13&2.63e-13\\
$\psi _{2}$&9.17e-14&1.10e-14&$\psi _{13}$&1.90e-13&5.43e-14&$\psi _{24}$&7.15e-13&4.11e-13\\
$\psi _{3}$&1.29e-13&1.63e-14&$\psi _{14}$&1.34e-13&5.18e-14&$\psi _{25}$&9.98e-13&4.19e-13\\
$\psi _{4}$&1.23e-13&1.90e-14&$\psi _{15}$&2.03e-13&6.65e-14&$\psi _{26}$&1.25e-12&5.38e-13\\
$\psi _{5}$&1.22e-13&2.00e-14&$\psi _{16}$&2.73e-13&8.48e-14&$\psi _{27}$&1.52e-12&6.23e-13\\
$\psi _{6}$&1.04e-13&2.24e-14&$\psi _{17}$&2.65e-13&9.30e-14&$\psi _{28}$&1.48e-12&7.26e-13\\
$\psi _{7}$&1.31e-13&2.58e-14&$\psi _{18}$&3.68e-13&1.15e-13&$\psi _{29}$&1.99e-12&9.46e-13\\
$\psi _{8}$&1.14e-13&2.87e-14&$\psi _{19}$&3.73e-13&1.16e-13&$\psi _{30}$&1.80e-12&1.04e-12\\
$\psi _{9}$&1.23e-13&3.03e-14&$\psi _{20}$&4.03e-13&2.01e-13&$\psi _{31}$&2.77e-12&1.27e-12\\
$\psi _{10}$&1.10e-13&3.18e-14&$\psi _{21}$&5.31e-13&1.82e-13&$\psi _{32}$&3.38e-12&1.80e-12\\
$\psi _{11}$&1.74e-13&3.64e-14&$\psi _{22}$&4.84e-13&2.58e-13&$\psi _{33}$&3.50e-12&1.83e-12\\\hline
\end{tabular}
\end{table}
\end{center}
\subsection{Neumann Problem}
Next, using the same notations and discretization procedure, the
Neumann problem
\begin{equation}\label{neupb}
 \begin{cases}
 u-\triangle u=f&\text{in }\Omega=\mathbb{B}(0,2),\\
 \partial _\nu u=0&\text{on }\Gamma=\mathbb{S}^1_2,
 \end{cases}
\end{equation}
for $f(x)=\cos(\frac{\pi}{2}r)\bigl(1+\frac{\pi
  ^2}{4}\bigr)+\frac{\pi}{2}\sin(\frac{\pi}{2}r)/r$,
$r=\sqrt{x_1^2+x_2^2}$ and $x\in B$. This problem has the exact
solution $u$ given by $u(x)=\cos\bigl(\frac{\pi}{2}r(x)\bigr)$, $x\in
\Omega $. In order to show that the method is robust in the sense that
it does not depend on the exact choices of its ingredients, a
different cutoff function is used in order to modify the
right-hand-side $f$ to make it into a doubly periodic function which
fits the periodic framework. More specifically, take
$$
 \psi(x)=\frac{1}{2}\big\{ 1+\tanh \bigl(
 -\frac{5}{2}[r-(\pi-0.2)^2]\bigr) \big\},
$$
which essentially vanishes close to the the boundary of $B$ and takes
the value $1$ on $\Omega$. Replace then $f$ by $\tilde f=f\psi$ to obtain a
periodic function which coincides with $f$ on $\Omega$. In numerical
experiments, this is clearly performed on the grid, i.e. by replacing
$f^m$ by $\tilde f^m=\psi ^mf^m$. The solution procedure is
parallel to that employed for the Dirichlet problem. First the
function
$$
 v^m=(1^m-\triangle ^m)^{-1}\tilde f^m=\mathcal{F}^{-1}_m
 \operatorname{diag}\bigl( (\frac{1}{1+|k|^2})_{k\in
   \mathbb{Z}^2_m}\bigr)\mathcal{F}_m(\tilde f^m)
$$
is computed. Then the kernel matrix $M$ is obtain as
$$
 M_{jk}=\big\langle -\bigl(\partial
 _{\nu(y_j)}\delta_{y_j}\bigr)^m,(1^m-\triangle
 ^m)^{-1}\varphi^m_{\tilde y_k}\big\rangle _{q^m},\: 
 j,k=1,\dots,m,  
$$
where
$$
 -\bigl(\partial _{\nu(y_j)}\delta_{y_j}\bigr)^m=-\nu_1(y_j)(\delta
 _{y_j^1}')^m\otimes \delta^m_{y_j^2}-\nu_2(y_j)\delta^m
 _{y_j^1}\otimes(\delta '_{y_j^2})^m 
$$
is used as a discretization of the normal derivative operator at the
point $(y_j^1,y_j^2)=y_j\in \Gamma^n$. Finally the weight vector $w^n$
is determined by solving
$$
 \big \langle -\bigl(\partial _{\nu(y_j)}\delta_{y_j}\bigr)^m,v^m+\sum
 _{k=1}^nw^n_k(1^m-\triangle ^m)^{-1}\varphi ^m_{\tilde y_k}\big
 \rangle _{q^m}=z+Mw^n  =0,
$$
where $z_j=\langle -\bigl(\partial _{\nu(y_j)}\delta_{y_j}\bigr)^m,v^m
\rangle _{q^m}$ for $j=1,\dots,n$.
Results of similar numerical experiments to those performed for the
Dirichlet problem are summarized in Table \ref{Table:Neumann}.
\begin{rem}
Notice that in all numerical experiments, radially symmetric functions
were used. One reason is that radial symmetry is not readily
compatible with periodicity in that it cannot be represented with very
few periodic modes. Another is that explicit formul{\ae} are
available. 
\end{rem}
\begin{table}[tp]
\caption{Numerical experiments for the Neumann problem \eqref{neupb}.}
\label{Table:Neumann}
\begin{tabular}{|cccccc|cccccc|}\hline
$m$&$n$&$\delta$&$\operatorname{cond}(M)$&$e^{m,n}_\infty$&$e^{m,n}_2$
&$m$&$n$&$\delta$&$\operatorname{cond}(M)$&$e^{m,n}_\infty$&$e^{m,n}_2$
\\\hline
32& 32& 0.3& 3.31e+00& 4.84e-03& 2.97e-03&128& 64& 0.3& 2.82e+01& 2.59e-04& 2.55e-04\\
& & 0.4& 6.04e+00&8.67e-03& 4.94e-03&& & 0.4& 9.83e+01& 2.12e-05& 2.02e-05\\
& & 0.5& 9.61e+00& 3.99e-03& 3.86e-03&& & 0.5& 3.27e+02& 6.24e-06& 4.44e-06\\
& 48& 0.3& 9.98e+00& 1.31e-02& 1.20e-02&256& 64& 0.3& 2.84e+01& 2.65e-04& 2.56e-04\\
& & 0.4& 2.85e+01& 1.22e-02& 9.84e-03&& & 0.4& 9.82e+01& 1.91e-05& 1.84e-05\\
& & 0.5& 1.79e+02& 3.37e-03& 2.35e-03&& & 0.5& 3.26e+02& 1.53e-06& 1.47e-06\\
64& 32& 0.3& 3.14e+00& 2.18e-02& 2.08e-02&& 128& 0.3& 2.46e+03& 3.41e-08& 3.34e-08\\
& & 0.4& 5.45e+00& 5.58e-03& 5.57e-03&& & 0.4& 3.33e+04& 4.41e-10& 1.75e-10\\
& & 0.5& 9.38e+00& 2.39e-03& 2.19e-03&& & 0.5& 4.08e+05& 5.96e-10& 3.87e-10\\
& 48& 0.3& 9.36e+00& 1.70e-03& 1.36e-03&512& 128& 0.3& 2.46e+03& 3.43e-08& 3.35e-08\\
& & 0.4& 2.33e+01& 5.51e-04& 3.24e-04&& & 0.4& 3.33e+04& 2.65e-10& 1.93e-10\\
& & 0.5& 5.35e+01& 2.94e-04& 1.42e-04&& & 0.5& 4.08e+05& 1.05e-10& 3.95e-11\\
& 64& 0.3& 2.90e+01& 1.38e-04& 5.22e-05&& 256& 0.3& 1.88e+07& 1.76e-10& 7.73e-11\\
& & 0.4& 9.89e+01& 6.46e-04& 5.20e-04&& & 0.4& 3.88e+09& 1.76e-10& 7.72e-11\\
& & 0.5& 3.48e+02& 3.98e-04& 3.53e-04&&&0.5& 6.47e+11& 1.76e-10&7.72e-11\\\hline
\end{tabular}
\end{table}
\begin{rem}
While it might appear that in the construction of the matrix kernel
$M$, one needs to solve $n$ problems in the discretized periodicity 
box, this is not always the case. As for the Dirichlet problem, the
operator $1-\triangle$ is translation invariant. It follows that it is
enough to solve one such problem, e.g. for $k=1$ since all other
solutions would be a translate of the solution for $k=1$. This is true
because the datum $\varphi _{\tilde y_k}$ is a translate of
$\varphi_{\tilde y_1}$. To make sure that the translation be
compatible with the grid $G^m$, the theoretical location $\tilde 
y=y+\delta \nu_\Gamma(y)$ would have to be replaced by the closest
grid point in $G^m$ (for instance).
\end{rem}
\section{Conclusions}
An effectively meshless approach to boundary value problems in general
geometry domains is proposed based on the use of uniform
discretizations of an encopassing computational box. Exploiting a
pseudodifferential operator framework, relevant kernels can be
replaced by smoother kernels which allow for more accurate numerical
resolution. No explicit knowledge of the kernels is required beyond
their analytical structure which is used in an essential way in order
to construct their numerical counterparts. While the smooth kernels,
which correspond to infinitely smoothing compact operators, and their
associated discretization matrices are badly ill-conditioned, they can
very effectively be preconditioned by use of their ``rougher''
counterparts with singular kernels in an arguably natural way at
minimal additional cost. The methodology proposed is very general and
can be employed in three space dimensions as well as to more general
linear and nonlinear boundary value problems. The fact that no
remeshing is required makes this method particularly appealing for
free and moving boundary problems. These extensions will be the topic of
forthcoming papers. 
\bibliography{lite.bib}
\end{document}